\newtheorem*{theoA}{Theorem A}
\newtheorem*{theoB}{Theorem B}
\newtheorem*{theoC}{Theorem C}
\newtheorem*{theoD}{Theorem D}
\newtheorem{theo}{Theorem}[section]
\newtheorem{lem}{Lemma}[section]
\newtheorem{ques}{Question}[section]
\newtheorem{cor}{Corollary}[section]
\newtheorem{exm}{Example}[section]
\newtheorem{defi}{Definition}[section]
\newtheorem{rem}{Remark}[section]
\newcommand{\ol}{\overline}
\newcommand{\be}{\begin{equation}}
\newcommand{\ee}{\end{equation}}
\newcommand{\beas}{\begin{eqnarray*}}
\newcommand{\eeas}{\end{eqnarray*}}
\newcommand{\bea}{\begin{eqnarray}}
\newcommand{\eea}{\end{eqnarray}}
\numberwithin{equation}{section}
\begin{document}
\title[ UNIQUENESS OF A MEROMORPHIC FUNCTIONS]{Uniqueness of a meromorphic function and its linear difference polynomial sharing two sets with finite weights}
\date{}
\author[ G. Haldar ]{ Goutam Haldar}
\date{}
\address{ Department of Mathematics, Malda College, Rabindra Avenue, Malda, West Bengal 732101, India.}
\email{goutamiitm@gmail.com}
\maketitle
\let\thefootnote\relax
\footnotetext{2000 Mathematics Subject Classification: 30D35.}
\footnotetext{Key words and phrases: Meromorphic function, Shift operator, Linear difference operator, weighted sharing.}
\footnotetext{Type set by \AmS -\LaTeX}
\setcounter{footnote}{0}
\begin{abstract}
In this paper, we investigate the uniqueness property of meromorphic functions together with its linear difference polynomial sharing two sets. Using the polynomial introduced in [FILOMAT 33(18)(2019), 6055-6072], we have improved the result of Li-Chen [Abstract and Applied Analysis, 2014,Article ID 894968] in sense of reducing cardinalities of the main set $ S $ and the associated weights. Some examples have been exhibited to validate our certain claims in the main result.
\end{abstract}
\section{Introduction, Definitions and Results}
Let $f$ and $g$ be two non-constant meromorphic functions defined on the set of complex numbers $\mathbb{C}$ and  $a\in\mathbb{C}\cup\{\infty\}$. We say that $f$ and $g$ share the value $a$ CM (counting multiplicities) if $f-a$ and $g-a$ have the same set of zeros with the same multiplicities and if we do not the multiplicities, then $f$ and $g$ are said to share the value $a$ IM (ignoring multiplicities).\par
Throughout the paper, we have used the standard notations and definitions of value distribution theory of meromorphic functions introduced in \cite{17}.
We recall that $T(r,f)$ denotes the Nevanlinna characteristic function of the non-constant meromorphic function. Also we denote by $S(r,f)$ any quantity satisfying $T(r,f)=o(T(r,f))$ as $r\rightarrow \infty$ possibly outside a finite set of logarithmic measure and $N(r,a;f)$ ($\ol N(r,a;f)$) denotes the counting function (reduced counting function) of $a$-points of meromorphic functions $f$. A meromorphic function $a(z)$ is said to be a small function of $f$ if $T(r,a)=o(T(r,f))$. Let $S(f)$ be the set of all small functions of $f$.
For a set $S\subset S(f)$, we define
\beas E_{f}(S)=\bigcup_{a\in S}\{z|f(z)-a(z)=0\},\eeas where each zero is counted according to its multiplicity and
$$\ol E_{f}(S)=\bigcup_{a\in S}\{z|f(z)-a(z)=0 \},\;\text{where each zero is counted only once}.$$\par
If $E_{f}(S)=E_{g}(S)$, we say that $f$, $g$ share the set S CM and if $\ol E_{f}(S)=\ol E_{g}(S)$, we say $f$, $g$ share the set S IM.\par
In 2001, Lahiri  (\cite{13}, \cite{14}) introduced a remarkable notion called weighted sharing of values and sets  which which renders a useful tool in the literature. We explain the notion in the following.
\begin{defi}\cite{13}
Let $k$ be a non-negative integer or infinity. For $a\in \mathbb{C}\cup\{\infty\}$ we denote by $E_{k}(a,f)$ the set of all $a$-points of $f$, where an $a$ point of multiplicity $m$ is counted $m$ times if $m\leq k$ and $k+1$ times if $m>k.$ If $E_{k}(a,f)=E_{k}(a,g),$ we say that $f$, $g$ share the value $a$ with weight $k$.
\end{defi}
We write $f$, $g$ share $(a,k)$ to mean that $f,$ $g$ share the value $a$ with weight $k.$ Clearly if $f,$ $g$ share $(a,k)$ then $f,$ $g$ share $(a,p)$ for any integer $p$, $0\leq p<k.$ Also we note that $f,$ $g$ share a value $a$ IM or CM if and only if $f,$ $g$ share $(a,0)$ or $(a,\infty)$ respectively.\par
\begin{defi}\cite{13}
Let $S$ be a set of distinct elements of $\mathbb{C}\cup\{\infty\}$ and $k$ be a non-negative integer or $\infty$. We denote by $E_{f}(S,k)$ the set $\bigcup_{a\in S}E_{k}(a,f)$.\par Clearly $E_{f}(S)=E_{f}(S,\infty)$ and $\ol E_{f}(S)=E_{f}(S,0)$.
\end{defi}
Suppose $p$ be a non-zero complex constant. We define the shift of $f(z)$ by $f(z+p)$ and define the difference operators by $$\Delta_{p}f(z)=f(z+p)-f(z),$$ $$\Delta_{p}^{n}f(z)=\Delta_{p}^{n-1}(\Delta_{p}f(z)),\, n\in \mathbb{N},\, n\geq 2.$$
In 1995, Li-Yang in \cite{18} obtained the following result.

\begin {theoA}\cite{18} Let $m\geq 2$ and let $n> 2m+6$ with $n$ and $n-m$ having no common factors. Let $a$ and $b$ be two non-zero constants such that the equation $\omega^{n}+a\omega^{n-m}+b=0$ has no multiple roots. Let $S=\{\omega\mid\omega^{n}+a\omega^{n-m}+b=0\}$. Then for any two non constant meromorphic functions $f$ and $g$, the conditions $E_{f}(S,\infty)=E_{g}(S,\infty)$ and $E_{f}(\{\infty\},\infty)=E_{g}(\{\infty\},\infty)$ imply $f\equiv g$.
\end {theoA}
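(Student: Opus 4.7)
The plan is to introduce the auxiliary functions $F := f^n + af^{n-m} + b$ and $G := g^n + ag^{n-m} + b$ and compare them. Writing $P(w) = w^n + aw^{n-m} + b$, the hypothesis guarantees that $P$ has $n$ distinct simple zeros $\omega_1, \ldots, \omega_n$, namely the elements of $S$. The condition $E_f(S,\infty) = E_g(S,\infty)$ then says exactly that $F$ and $G$ share $0$ CM, while $E_f(\{\infty\},\infty) = E_g(\{\infty\},\infty)$ combined with $\deg P = n$ gives that $F$ and $G$ share $\infty$ CM. Consequently $F/G$ is a nowhere-vanishing entire function, so $F \equiv e^{\alpha} G$ for some entire function $\alpha$, and the task reduces to proving that $e^\alpha \equiv 1$ and then to deducing $f \equiv g$ from $P(f) \equiv P(g)$.

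The main task is to force $e^\alpha$ to be constant. Taking logarithmic derivatives, $\alpha' = F'/F - G'/G$ is entire, since the simple-pole residues of $F'/F$ and $G'/G$ at the shared zeros and poles of $F, G$ cancel; by the logarithmic-derivative lemma, $T(r,\alpha') = S(r,f) + S(r,g)$. I would then invoke Nevanlinna's second main theorem for $f$ and for $g$ at the $n$ simple roots $\omega_i$ of $P$, using the identity $\sum_{i}\ol N(r,\omega_i;f) = \ol N(r,0;F) = \ol N(r,0;G) = \sum_{i}\ol N(r,\omega_i;g)$ to link the two estimates. Combining these bounds with the smallness of $\alpha'$ and with the ramification structure of $P$ (its derivative factors as $P'(w) = w^{n-m-1}(nw^m + a(n-m))$, producing the critical value $b$ at $w=0$ and $m$ further critical values at the roots of $w^m = -a(n-m)/n$) under the hypothesis $n > 2m+6$, the assumption $\alpha' \not\equiv 0$ yields a contradiction. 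Matching leading coefficients of $F$ and $G$ as degree-$n$ polynomials in $f$ and $g$ then forces $e^\alpha = 1$, i.e.\ $F \equiv G$.

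Once $P(f) \equiv P(g)$, I would set $h := f/g$ and rewrite the identity as $g^m(h^n - 1) \equiv a(1 - h^{n-m})$. If $h$ is constant then $h^n = 1$ and $h^{n-m} = 1$; since $\gcd(n,n-m) = 1$ this forces $h = 1$, whence $f \equiv g$. If $h$ is nonconstant, cancelling the only common factor $h-1$ (unique since $\gcd(n,n-m)=1$) gives
\[
g^m = -a\,\frac{\prod_{k=1}^{n-m-1}(h-\eta_k)}{\prod_{j=1}^{n-1}(h-\zeta_j)},
\]
where $\{\zeta_j\}$ and $\{\eta_k\}$ are the nontrivial $n$-th and $(n-m)$-th roots of unity, and the $2n-m-2$ values $\{\zeta_j\}\cup\{\eta_k\}$ are pairwise distinct. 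Since $g^m$ is an $m$-th power, every zero of each $h-\zeta_j$ and of each $h-\eta_k$ must have multiplicity divisible by $m$; the second main theorem applied to $h$ at these $2n-m-2$ values then gives
\[
(2n-m-4)\,T(r,h) \leq \frac{2n-m-2}{m}\,T(r,h) + S(r,h),
\]
which contradicts $n > 2m+6$, completing the proof.

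The decisive technical difficulty is clearly the middle stage, namely ruling out a nonconstant $e^\alpha$. CM sharing of only the two values $0$ and $\infty$ does not by itself control $F/G$, so one must squeeze the additional information out of the ramification of $P$ and out of the interplay between $T(r,f)$, $T(r,g)$, and the entire function $\alpha'$; this is precisely where the sharp bound $n > 2m+6$ is expected to enter, while the algebraic endgame needs only $n > 4$.
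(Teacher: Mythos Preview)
Theorem~A is not proved in this paper at all: it is quoted verbatim as a background result of Li--Yang \cite{18}, and the paper's own contribution is Theorem~\ref{t1}, whose proof occupies Section~3. There is therefore no ``paper's own proof'' of Theorem~A against which to compare your proposal.

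That said, your outline does follow the general Li--Yang strategy, and two points deserve comment. First, the sentence ``Matching leading coefficients of $F$ and $G$ as degree-$n$ polynomials in $f$ and $g$ then forces $e^{\alpha}=1$'' is not a valid step: $F=P(f)$ and $G=P(g)$ are polynomials in \emph{different} meromorphic functions, so there is nothing to match. Once $e^{\alpha}=c$ is known to be constant, the case $c\neq 1$ must be excluded by a further Nevanlinna-theoretic argument (e.g.\ from $f^{n}+af^{n-m}+(1-c)b=c\bigl(g^{n}+ag^{n-m}\bigr)$ one applies the second main theorem to the extra value produced by the nonzero constant term), not by algebra. Second, the ``middle stage'' you correctly identify as decisive is left entirely as a plan: you name the ingredients (logarithmic-derivative lemma, critical values of $P$, second main theorem) but give no inequality chain, and it is precisely the bookkeeping in that chain that makes the hypothesis $n>2m+6$ the right one. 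As written, the proposal is a reasonable roadmap but not yet a proof.
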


Let us explain some standard definitions and notations of the value distribution theory available in \cite{17} which will be used in the paper.
\begin{defi}\cite{12a}
For $a\in \mathbb{C}\cup \{\infty\},$ we denote by $N(r,a;f\mid=1)$ the counting function of simple $a$-points of $f.$ For a positive integer $m,$ we denote by $N(r,a;f\mid\leq m)$ $(N(r,a;f\mid\geq m))$ the counting function of those $a$-point of $f$ whose multiplicities are not greater (less) than $m$, where each $a$-point is counted according to its multiplicity.
\end{defi}
$\ol N(r,a;f\mid\leq m)$ $(\ol N(r,a;f\mid\geq m))$ are defined similarly except that in counting the $a$-points of $f$ we ignore the multiplicity. Also $N(r,a;f\mid< m)$, $N(r,a;f\mid> m),$ $\ol N(r,a;f\mid< m)$ and $\ol N(r,a;f\mid>m)$ are defined similarly.
\begin{defi}\cite{14}
For $a\in \mathbb{C}\cup \{\infty\},$ we denote by $N_{2}(r,a;f)=\ol N(r,a;f)+\ol N(r,a;f\mid\geq 2).$
\end{defi}
\begin{defi}\cite{14}
Let $f$ and $g$ share a value $a$ IM. We denote by $\ol N_{*}(r,a;f,g)$ the counting function of those $a$-points of $f$ whose multiplicities differ from the multiplicities of the corresponding $a$-points of $g$.
\end{defi}
Recently, the shift and difference analogue of the Navanlinna theory has been established (see, e.g. \cite{23,Hal & Kor & JMAA & 2006,Hei & Kor & Lai & Rei & JMMA & 2009}). Many researchers started to consider the uniqueness of meromorphic functions sharing values with their shifts or differences operators (see, e.g. \cite{Aha & RM & 2019,Aha & BB & 2019,Ban & Aha & MS & 2019,Ban & Aha & JCMA & 2020}.) In 2010, Zhang \cite{16} considered a meromorphic function $f(z)$ sharing sets with its shift $f(z+p)$ and obtained the following result.

\begin{theoB}\cite{16}
Let $m\geq 2$ and $n\geq 2m+4$ with $n$ and $n-m$ having no common factors. Let $S=\{\omega \mid\omega^{n}+a\omega^{n-m}+b=0\}$, where $a$ and $b$ be two non-zero complex constants such that the equation $\omega^{n}+a\omega^{n-m}+b=0$ has  no multiple roots. Suppose that $f(z)$ is a non-constant meromorphic function of finite order. Then $E_{f(z)}(S,\infty)=E_{f(z+p)}(S,\infty)$ and $E_{f(z)}(\{\infty\},\infty)=E_{f(z+p)}(\{\infty\},\infty)$ imply $f(z)\equiv f(z+p).$
\end{theoB}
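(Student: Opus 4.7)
The plan is to establish $P(f(z))\equiv P(f(z+p))$, with $P(\omega)=\omega^{n}+a\omega^{n-m}+b$, and then to pass from that identity to $f(z)\equiv f(z+p)$ using the algebraic structure of $P$ (simple roots together with $\gcd(n,n-m)=1$). First I would set $F(z)=P(f(z))$ and $G(z)=P(f(z+p))$: since $P$ has only simple roots $\omega_{1},\ldots,\omega_{n}$, the CM sharing of $S$ forces $F$ and $G$ to have identical zeros with identical multiplicities, while the poles of $F$ (resp.~$G$) are precisely those of $f(z)$ (resp.~$f(z+p)$) with multiplicities scaled by the factor $n$; so the CM sharing of $\infty$ identifies the poles of $F$ and $G$ with multiplicity as well. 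Consequently $F/G=e^{\phi(z)}$ for an entire function $\phi$, and finite order of $f$---preserved by the shift $z\mapsto z+p$---forces $\phi$ to be a polynomial.

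The critical step is to promote $F/G=e^{\phi}$ to $F\equiv G$. Here I would lean on the Halburd--Korhonen shift invariance $T(r,f(z+p))=T(r,f)+S(r,f)$ valid in finite order, together with the Second Main Theorem applied to $f$ against the $n+1$ targets $\omega_{1},\ldots,\omega_{n},\infty$,
\[(n-1)T(r,f)\leq\sum_{i=1}^{n}\overline N(r,\omega_{i};f)+\overline N(r,\infty;f)+S(r,f).\]
Converting $\sum_{i}\overline N(r,\omega_{i};f)=\overline N(r,0;F)=\overline N(r,0;G)$, running the symmetric bound for $f(z+p)$, and controlling the counting function $\overline N(r,1;e^{\phi})$---which measures zeros of $F-G$ lying off the common zeros of $G$---the hypothesis $n\geq 2m+4$ is precisely what is needed to force $\phi$ to be constant; substituting back into $F=e^{\phi}G$ then rules out $e^{\phi}=c\neq 1$.

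Once $P(f(z))\equiv P(f(z+p))$, I factor $P(x)-P(y)=(x-y)Q(x,y)$. Assuming $f(z)\not\equiv f(z+p)$ gives $Q(f(z),f(z+p))\equiv 0$; setting $u=f(z)/f(z+p)$ this reduces to
\[f(z+p)^{m}(u^{n}-1)+a(u^{n-m}-1)\equiv 0.\]
Coprimality of $n$ and $n-m$ makes $u^{n}-1$ and $u^{n-m}-1$ share only the simple factor $u-1$; cancelling it expresses $f(z+p)^{m}$ as a reduced rational function of $u$ with coprime numerator and denominator of degrees $n-1$ and $n-m-1$ respectively. Comparison of Nevanlinna characteristics on the two sides, combined with the shift identity, is incompatible unless $u\equiv 1$, i.e.~$f(z)\equiv f(z+p)$.

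The main obstacle is the middle step: CM sharing does not by itself force $e^{\phi}=1$, and one must extract from the Second Main Theorem applied symmetrically to $f$ and $f(z+p)$ enough numerical slack to exclude every nonconstant polynomial $\phi$ and then every nontrivial constant $c=e^{\phi}$. The sharp bound $n\geq 2m+4$ sits exactly where that balance first succeeds, the improvement over Theorem A's $n>2m+6$ being precisely the gain afforded by the Halburd--Korhonen shift invariance.
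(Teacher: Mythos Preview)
The paper does not contain a proof of Theorem~B: it is quoted as a prior result of Zhang \cite{16} and serves only as background motivation for the paper's main Theorem~\ref{t1}. Consequently there is no ``paper's own proof'' against which your proposal can be compared.

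That said, your outline follows the standard route for results of this type (and is broadly in the spirit of Zhang's original argument): pass to $F=P(f)$, $G=P(f(\cdot+p))$, use CM sharing of zeros and poles to write $F/G=e^{\phi}$ with $\phi$ polynomial, rule out nontrivial $\phi$ via the Second Main Theorem combined with the shift invariance $T(r,f(z+p))=T(r,f)+S(r,f)$, and finally analyze $P(f(z))\equiv P(f(z+p))$ algebraically. The skeleton is sound, but the step you yourself flag as the ``main obstacle'' is not carried out: you assert that the bound $n\geq 2m+4$ is ``precisely what is needed'' without actually running the inequality that eliminates nonconstant $\phi$ and then each constant $c\neq 1$. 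In particular, the phrase ``controlling the counting function $\overline N(r,1;e^{\phi})$'' is too vague to be a proof; one typically applies the Second Main Theorem to $f$ (or to $F$) with an additional value coming from $c$ and explicitly checks that the resulting inequality contradicts $n\geq 2m+4$. Similarly, in the final step your characteristic comparison for $u=f/f(\cdot+p)$ needs the explicit bound $mT(r,f)\leq (n-1)T(r,u)+S(r,f)$ together with $T(r,u)\leq 2T(r,f)+S(r,f)$ (or an equivalent count) to reach a contradiction, and you should state which inequality actually fails when $n\geq 2m+4$.
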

For an analogue result in difference operator, Chen-Chen  \cite{29} proved the following result.
\begin{theoC}\cite{29}
Let $m\geq 2$ and $n\geq 2m+4$ with $n$ and $n-m$ having no common factors. Let $a$ and $b$ be two non-zero complex constants such that the equation $\omega^{n}+a\omega^{n-m}+b=0$ has  no multiple roots. Let $S=\{\omega \mid\omega^{n}+a\omega^{n-m}+b=0\}$. Suppose that $f(z)$ is a non-constant meromorphic function of finite order satisfying $E_{f}(S,\infty)=E_{\Delta_{p}f}(S,\infty)$ and $E_{f}(\{\infty\},\infty)=E_{\Delta_{p}f}(\{\infty\},\infty)$. If \beas N(r,0;\Delta_{p}f)=T(r,f)+S(r,f),\eeas then $f\equiv \Delta_{p}f.$
\end{theoC}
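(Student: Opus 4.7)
My approach is to reduce the set-sharing hypotheses to a classical CM two-value sharing problem, force the two auxiliary functions to coincide via a second-main-theorem argument in the difference setting, and then use $\gcd(n,n-m)=1$ together with the density hypothesis $N(r,0;\Delta_{p}f)=T(r,f)+S(r,f)$ to pass from the ensuing polynomial identity to $f\equiv\Delta_{p}f$.

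First I set
\beas
F:=-\frac{f^{n}+af^{n-m}}{b},\qquad G:=-\frac{(\Delta_{p}f)^{n}+a(\Delta_{p}f)^{n-m}}{b}.
\eeas
The condition $E_{f}(S,\infty)=E_{\Delta_{p}f}(S,\infty)$ becomes the CM-sharing of $1$ by $F$ and $G$, and the CM-sharing of $\infty$ by $f,\Delta_{p}f$ transfers unchanged to $F,G$. The CM-sharing of $\infty$, together with the difference analogue of the logarithmic-derivative lemma (valid because $f$ has finite order), yields $T(r,\Delta_{p}f)\le T(r,f)+S(r,f)$, and hence $T(r,F)=nT(r,f)+S(r,f)$ and $T(r,G)\le nT(r,f)+S(r,f)$, so that $S(r,F)$ and $S(r,G)$ both absorb into $S(r,f)$. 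Factoring $F=-f^{n-m}(f^{m}+a)/b$ and letting $c_{1},\dots,c_{m}$ denote the roots of $X^{m}+a$ yields $\ol N(r,0;F)\le (m+1)T(r,f)+S(r,f)$, with an analogous bound for $G$.

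Since $F,G$ share $1$ and $\infty$ CM, the ratio $(F-1)/(G-1)$ is entire without zeros, and has polynomial growth because $f$ is of finite order; thus $(F-1)/(G-1)=e^{\gamma}$ for some polynomial $\gamma$. Applying the second main theorem to $F$ and $G$ at $\{0,1,\infty\}$, equating the $1$- and $\infty$-counting functions by the CM-sharing, and plugging in the bounds above, the hypothesis $n\ge 2m+4$ makes the leading term $nT(r,f)$ too large to be accommodated if $\gamma$ is non-constant; a direct pole/zero matching argument excludes $e^{\gamma}$ being a non-trivial constant (since otherwise $F$ and $G$ would have shared $1$-points forced to be poles). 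Hence $F\equiv G$, that is, $P(f)\equiv P(\Delta_{p}f)$.

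Now suppose, for contradiction, that $f\not\equiv\Delta_{p}f$ and set $h:=\Delta_{p}f/f$. Substituting $\Delta_{p}f=hf$ into $P(f)=P(\Delta_{p}f)$ and simplifying gives
\beas
f^{m}+a=\frac{a\,h^{n-m}(h^{m}-1)}{h^{n}-1}.
\eeas
Because $\gcd(n,n-m)=1$, the only common root of $h^{n}=1$ and $h^{n-m}=1$ is $h=1$; hence after cancelling $(h-1)$ the right-hand side has coprime numerator and denominator. Consequently, at each zero of $\Delta_{p}f$ with $f\ne 0$ one has $h=0$, and a zero of $h$ of order $k$ forces the corresponding $c_{j}$-point of $f$ to have multiplicity exactly $k(n-m)$. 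Combining this multiplicity amplification with $\sum_{j=1}^{m}N(r,c_{j};f)\le mT(r,f)+S(r,f)$, the density hypothesis $N(r,0;\Delta_{p}f)=T(r,f)+S(r,f)$, and the finite-order estimate $N(r,0;f(z+p))\le N(r,0;f)+S(r,f)$ produces a chain of inequalities incompatible with $n\ge 2m+4$, unless $h\equiv 1$. Hence $f\equiv\Delta_{p}f$. The main obstacle I anticipate is this final multiplicity-amplification step: the bookkeeping has to be delicate enough to turn the density hypothesis into a genuine numerical contradiction, while simultaneously excluding the possibility that $h$ is a nonconstant algebraic function of $f$, which is precisely where $\gcd(n,n-m)=1$ enters essentially.
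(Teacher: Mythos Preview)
The paper does not prove Theorem~C; it is quoted from Chen--Chen \cite{29} purely as motivational background, and the paper's own argument (for Theorem~\ref{t1}) treats a different polynomial $P(z)=az^{n}+bz^{2m}+cz^{m}+d$ via the auxiliary functions $H$ and $V$ of Lemmas~\ref{2.1}--\ref{2.12}. There is therefore no proof in this paper against which your proposal can be compared.

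Independently of that, your outline has a real gap at the passage from $(F-1)/(G-1)=e^{\gamma}$ to $F\equiv G$. Your dismissal of the case $e^{\gamma}\equiv c$ with $c$ a constant $\ne 1$ (``since otherwise $F$ and $G$ would have shared $1$-points forced to be poles'') is simply wrong: if $F-1=c(G-1)$ then the common $1$-points of $F$ and $G$ remain ordinary $1$-points and no poles are produced. In the standard treatments---and in the analogous Subcases~2.2.1 and 2.2.2 of the proof of Theorem~\ref{t1} here---this affine relation $F=cG+1-c$ (and its companion $FG\equiv 1$, which your $e^{\gamma}$ formulation never reaches) is eliminated by separate second-main-theorem estimates that you have not supplied. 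Likewise, the final ``multiplicity-amplification'' paragraph is only a heuristic: you have not shown how the bound $\sum_{j}N(r,c_{j};f)\le mT(r,f)$, the density hypothesis $N(r,0;\Delta_{p}f)=T(r,f)+S(r,f)$, and the shift estimate combine into an actual numerical contradiction with $n\ge 2m+4$.
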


In $2014$, Li-Chen \cite{18a} considered a linear difference polynomial of $f$ in the following manner \be\label{e1.1a} L(z,f)=b_{k}(z)f(z+c_{k})+\ldots+b_{0}(z)f(z+c_{0}),\ee where $b_{k}(z)(\not\equiv 0)$, $\ldots,b_{0}(z)$ are small functions of $f$, $c_{0},c_{1},\ldots,c_{k}$ are complex constants and $k$ is a non-negative integer satisfying one of the following conditions: \bea\label{e1.1b} \;b_{0}(z)+\ldots+b_{k}(z)\equiv 1,\\\label{e1.1c}\;b_{0}(z)+\ldots+b_{k}(z)\equiv 0\eea and obtained the following theorem.
\begin{theoD}\cite{18a}
Let $m\geq 2$ and $n\geq 2m+4$ with $n$ and $n-m$ having no common factors. Let $a$ and $b$ be two non-zero complex constants such that the equation $\omega^{n}+a\omega^{n-m}+b=0$ has  no multiple roots. Let $S=\{\omega \mid\omega^{n}+a\omega^{n-m}+b=0\}$. Suppose that $f(z)$ is a non-constant meromorphic function of finite order and $L(z,f)$ is of the form (\ref{e1.1a}) satisfying the conditions (\ref{e1.1b}) and (\ref{e1.1c}). If  $E_{f}(S,\infty)=E_{L(z,f)}(S,\infty)$, $E_{f}(\{\infty\},\infty)=E_{L(z,f)}(\{\infty\},\infty)$ and $N(r,0;L(z,f))=T(r,f)+S(r,f)$, then $$f(z)\equiv L(z,f).$$
\end{theoD}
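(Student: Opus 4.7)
The plan is to reduce Theorem D to a classical uniqueness statement by constructing two auxiliary functions. Writing $L=L(z,f)$ for brevity, set
\[
F=-\frac{1}{b}\bigl(f^{n}+af^{n-m}\bigr),\qquad G=-\frac{1}{b}\bigl(L^{n}+aL^{n-m}\bigr).
\]
Since the polynomial $P(\omega)=\omega^{n}+a\omega^{n-m}+b$ has $S$ as its simple-root zero set, one has $F-1=-P(f)/b$ and $G-1=-P(L)/b$; thus the hypothesis $E_{f}(S,\infty)=E_{L}(S,\infty)$ translates into $F$ and $G$ sharing $1$ CM, while $E_{f}(\{\infty\},\infty)=E_{L}(\{\infty\},\infty)$ yields $\infty$ CM sharing.

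Next I would collect the growth and counting estimates. The difference analogues of the logarithmic derivative lemma (Halburd--Korhonen, Chiang--Feng), together with the coefficient normalizations (\ref{e1.1b}) or (\ref{e1.1c}), give $T(r,f(z+c_{j}))=T(r,f)+S(r,f)$ and hence $T(r,L)=O(T(r,f))+S(r,f)$, so that $S(r,L)=S(r,f)$. The factorization $F=-f^{n-m}(f^{m}+a)/b$ and its analogue for $G$ then yield $\ol N(r,0;F)\leq(m+1)T(r,f)+S(r,f)$ and $\ol N(r,\infty;F)\leq T(r,f)+S(r,f)$, with parallel bounds for $G$. Since $T(r,F)=nT(r,f)+S(r,f)$ and $T(r,G)=nT(r,L)+S(r,f)$, dividing gives $\Theta(0,F),\Theta(0,G)\geq(n-m-1)/n$ and $\Theta(\infty,F),\Theta(\infty,G)\geq(n-1)/n$, so that
\[
\Theta(0,F)+\Theta(0,G)+\Theta(\infty,F)+\Theta(\infty,G)\geq 4-\frac{2m+4}{n}\geq 3,
\]
using $n\geq 2m+4$. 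A classical Nevanlinna--Yi-type uniqueness lemma then forces either $F\equiv G$ or $FG\equiv 1$.

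The alternative $FG\equiv 1$ would give $f^{n-m}(f^{m}+a)\cdot L^{n-m}(L^{m}+a)\equiv b^{2}$. Any zero $z_{0}$ of $L$ (at which $L^{m}+a=a\neq 0$) would force a pole of $f$ of the same total order at $z_{0}$; but the CM sharing of $\infty$ forces such a pole of $f$ to be a pole of $L$ as well, contradicting $L(z_{0})=0$. Hence $L$ is zero-free, contradicting $N(r,0;L)=T(r,f)+S(r,f)$. Therefore $F\equiv G$, i.e.\ $f^{n}+af^{n-m}=L^{n}+aL^{n-m}$. Setting $h=L/f$ and dividing by $f^{n}$ gives $f^{m}(h^{n}-1)=a(1-h^{n-m})$. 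If $h\not\equiv 1$, factoring $(h-1)$ from both sides yields $f^{m}Q_{1}(h)=-aQ_{2}(h)$, where $Q_{1},Q_{2}$ are polynomials whose roots are the non-trivial $n$-th and $(n-m)$-th roots of unity respectively. By $\gcd(n,n-m)=1$ these root sets are disjoint, so evaluation at any root $\alpha$ of $Q_{1}$ forces $0$ on the left and a nonzero value on the right; hence $h$ omits all $n-1\geq 3$ such values. Picard's theorem then renders $h$ constant, and substituting back forces $h\equiv 1$, contradicting the standing assumption. So $f\equiv L$.

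The main obstacle I anticipate is the delicate endgame at the boundary $n=2m+4$, where the crude deficiency estimate is an exact equality and the naive Nevanlinna--Yi lemma fails to apply. Sharpening the count via $N_{2}$-type counting functions, and exploiting the fact that every zero of $f$ (respectively $L$) off $S$ produces a zero of $F$ (respectively $G$) of multiplicity $n-m\geq m+4\geq 2$, should bring the estimate into a form suitable for a Lahiri--Yi refinement of the uniqueness lemma. Beyond that, the technical heart of the argument is the systematic use of the difference analogue of the logarithmic derivative lemma to propagate error terms through $f\mapsto L\mapsto G$, while the hypothesis $N(r,0;L)=T(r,f)+S(r,f)$ plays its decisive role at exactly one point, namely the exclusion of $FG\equiv 1$.
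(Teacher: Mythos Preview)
Theorem~D is quoted in this paper as a background result of Li--Chen \cite{18a}; the paper does not give its own proof of it, so there is nothing here to compare your attempt against. The paper's own contribution is Theorem~\ref{t1}, which concerns a different polynomial $P(z)=az^{n}+bz^{2m}+cz^{m}+d$ and a different auxiliary function $W(z)=-az^{n}/(bz^{2m}+cz^{m}+d)$, and whose proof runs through the functions $H$, $V$ and Lemmas~\ref{2.1}--\ref{2.12} rather than through a deficiency-sum argument of the type you sketch.

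On the merits of your outline, one step needs repair. In the endgame after $F\equiv G$, from $f^{m}Q_{1}(h)=-aQ_{2}(h)$ you cannot conclude that $h$ \emph{omits} the non-trivial $n$-th roots of unity: at a point $z_{0}$ with $h(z_{0})=\alpha$ and $Q_{1}(\alpha)=0$, the function $f$ may well have a pole, so the left side is an indeterminate $\infty\cdot 0$ and no contradiction arises. What the identity does force is that every $\alpha$-point of $h$ has multiplicity divisible by $m$ (since $\alpha$ is a simple zero of $Q_{1}$ and $Q_{2}(\alpha)\neq 0$, a zero of $Q_{1}(h)$ of order $q$ at $z_{0}$ must be cancelled by a pole of $f^{m}$ of order exactly $q$, whence $m\mid q$). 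Feeding this into the second fundamental theorem for the $n-1$ values $\alpha$ gives $\bigl(n-3-\tfrac{n-1}{m}\bigr)T(r,h)\leq S(r,h)$, and the coefficient is positive for $m\geq 2$, $n\geq 2m+4$, so $h$ is constant as desired. Your own caveat about the borderline $n=2m+4$, where the crude deficiency sum hits $3$ exactly and one must pass to $N_{2}$-type counts exploiting the high-order zeros of $F$ at $f=0$, is well taken and is indeed where the work lies.
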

\begin{rem}\label{r1.1}
In $(\ref{e1.1a})$, if we assume $c_{j}=jp$, $j=0,1,\ldots,k$ and $b_{k}(z)={k\choose 0}$, $b_{k-1}=-{k\choose 1}$, $b_{k-2}={k\choose 2}, \ldots,b_{0}=(-1)^{k}{k\choose k}$, then it can be easily seen that $L(z,f)=\Delta^{k}_{p}f$.

\end{rem}

\begin{rem}
From the above discussions, it is to be observed that in Theorem B, Theorem C and Theorem D the minimum cardinality of the main range set is 9. Also they have got their results under CM sharing hypothesis.
\end{rem}

So, naturally one may ask the following questions.\par
\begin{ques}
	Can we further get some other main range set whose cardinalities is less than $9\;?$
\end{ques}
\begin{ques}
	Is it possible to get the uniqueness of the meromorphic function $f$ with its linear difference polynomials under relax sharing hypothesis without assuming the conditions $(\ref{e1.1b})$, $(\ref{e1.1c})$ and $ N(r,0;L(z,f))=T(r,f)+S(r,f)$ $?$
\end{ques}
The above two questions are the motivation of writing the paper. In this paper, using a new type of range set by taking the zeros of the polynomial introduced in \cite{2*}, we proved a result which improves theorem D in some sense without assuming the conditions (\ref{e1.1b}), (\ref{e1.1c}) and $ N(r,0;L(z,f))=T(r,f)+S(r,f)$.\par

We now recall here the uniqueness polynomial $ P(z) $ introduced by Banerjee-Ahamed  \cite{2*} \be\label{e1.1} P(z)=az^{n}+bz^{2m}+cz^{m}+d,\ee where $n, m$ are positive integers with $n>2m$, $\text{gcd}(n,m)=1$ and $a,b,c,d\in\mathbb{C}-$$\{0\}$, $\displaystyle\frac{c^{2}}{4bd}=\displaystyle\frac{n(n-2m)}{(n-m)^{2}}\neq 1$; $a\neq \gamma_{j}=-\displaystyle\frac{1}{n}(2bm\omega_{j}^{2m}+cm\omega_{j}^{m})$, with $\omega_{j}$ be the roots of the equation $z^{m}+\displaystyle\frac{2nd}{c(n-m)}=0$.\par Then by simple calculation it can easily seen that all the zeros of $P(z)$ are simple.

Let us define \be\label{e1.1g} W(z)=-\displaystyle\frac{az^{n}}{bz^{2m}+cz^{m}+d}.\ee
Now, \beas W(z)-\displaystyle\frac{a}{\gamma_{j}}&=&-a\displaystyle\frac{\gamma_{j}z^{n}+bz^{2m}+cz^{m}+d}{\gamma_{j}(bz^{2m}+cz^{m}+d)}\\&=&-a\displaystyle
\frac{Q(z)}{\gamma_{j}(bz^{2m}+cz^{m}+d)},\eeas where $Q(z)=\gamma_{j}z^{n}+bz^{2m}+cz^{m}+d$.\par
It is to be observed that $Q(\omega_{j})=Q^{'}(\omega_{j})=Q^{''}(\omega_{j})=0$ but $Q^{'''}(\omega_{j})\neq 0$ for $j=1,2,\ldots,m$.\par
Therefore, \be\label{e1.1f} W(z)-\beta_{j}=-a\displaystyle\frac{\displaystyle\prod_{j=1}^{m}(z-\omega_{j})^{3}R_{n-3m}(z)}{bz^{2m}+cz^{m}+d},\ee where $R_{n-3m}(z)$ is a polynomial of degree $n-3m$ and \be\label{e1.1g} \beta_{j}=\displaystyle\frac{a}{\gamma_{j}},\;j=1,2,\ldots,m.\ee
 The following theorem is the main result of the paper.\par
\begin{theo}\label{t1}
Let $S=\{z\mid P(z)=0\},$ where $P(z)$ is a polynomial given by $(\ref{e1.1})$ and $n(\geq2m+3),$ $m(\geq1)$ be two positive integers such that $gcd(n,m)=1$, $a,b,c,d$ are non zero complex numbers, $\displaystyle\frac{c^{2}}{4bd}=\displaystyle\frac{n(n-2m)}{(n-m)^{2}}\neq 1$ and $a\neq \gamma_{j}$ for $j=1,2,\ldots,m$. Let $f(z)$ be a transcendental meromorphic function of finite order. Suppose $E_{f(z)}(S,3)=E_{L(z,f)}(S,3)$ and $E_{f(z)}(\{\infty\},0)=E_{L(z,f)}(\{\infty\},0)$, where $L(z,f)$ is defined in (\ref{e1.1a}) . Then $$f(z)\equiv L(z,f).$$

\end{theo}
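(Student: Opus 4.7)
The plan is to recast the set-sharing problem as a value-sharing problem via the auxiliary rational function $W$ of (\ref{e1.1g}), then invoke the weighted-sharing machinery to identify the resulting auxiliaries, after which the strong-uniqueness property of the polynomial $P$ from \cite{2*} closes the argument. Set $F(z) := W(f(z))$ and $G(z) := W(L(z,f))$. Since $P(\alpha)=0$ if and only if $W(\alpha)=1$, one has $F-1 = -P(f)/(bf^{2m}+cf^{m}+d)$ and similarly for $G$, so the hypothesis $E_{f}(S,3)=E_{L(z,f)}(S,3)$ translates exactly to the statement that $F$ and $G$ share the value $1$ with weight $3$, while the $(\infty,0)$ sharing for $f$ and $L(z,f)$ transfers to information linking the poles of $F$ and $G$.

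Since $f$ is of finite order, the difference analogue of the logarithmic derivative lemma (as in \cite{Hal & Kor & JMAA & 2006, Hei & Kor & Lai & Rei & JMMA & 2009}) gives $T(r,L(z,f)) = O(T(r,f)) + S(r,f)$, and standard rational-function calculus then yields $T(r,F) = nT(r,f)+S(r,f)$ with an analogous identity for $G$; any $S(r,F)$ or $S(r,G)$ can therefore be absorbed into $S(r,f)$. The decisive algebraic feature of $W$ is (\ref{e1.1f}): for each $j$, every $\omega_{j}$-point of $f$ is a zero of $F-\beta_{j}$ of multiplicity at least $3$, and similarly for $G$. Applying the second fundamental theorem to $F$ (resp.\ $G$) with the small targets $\{0,1,\beta_{1},\ldots,\beta_{m},\infty\}$ and exploiting the triple-root gain at the $\beta_{j}$'s yields an upper bound for $T(r,F)+T(r,G)$ controlled by reduced counting functions at zeros, poles, and $1$-points, which is tighter than the naive bound by a factor coming from the $\beta_{j}$'s.

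Next, a Lahiri-type lemma for $(1,3)$-sharing provides an inequality of the form
$$T(r,F)+T(r,G)\leq N_{2}(r,0;F)+N_{2}(r,0;G)+N_{2}(r,\infty;F)+N_{2}(r,\infty;G)+\ol N_{*}(r,1;F,G)+S(r,f),$$
and combining it with the estimates above and the hypothesis $n\geq 2m+3$ forces either $F\equiv G$ or one of the exceptional relations of the form $FG\equiv 1$ or $(F-1)(G-1)\equiv$ const. Each exceptional case is ruled out by pitting the shared-pole information and the specific zero/pole structure of $W$ against the multiplicity surplus at the $\beta_{j}$'s. This case-by-case elimination is the principal obstacle of the proof, since reducing the weight at $1$ from $\infty$ to $3$ and weakening the pole-sharing to IM leaves much less slack than in Theorem D; the condition $n\geq 2m+3$ is dictated precisely by this pinch.

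Once $F\equiv G$, i.e.\ $W(f)\equiv W(L(z,f))$, clearing denominators gives the polynomial identity $a f^{n}(bL^{2m}+cL^{m}+d)\equiv aL^{n}(bf^{2m}+cf^{m}+d)$, where $L=L(z,f)$. Under the standing assumptions $\gcd(n,m)=1$, $c^{2}/(4bd)=n(n-2m)/(n-m)^{2}\neq 1$, and $a\neq \gamma_{j}$, the polynomial $P$ in (\ref{e1.1}) is a strong uniqueness polynomial in the sense of \cite{2*}; this strong-uniqueness property transforms the above identity into $f\equiv L(z,f)$, completing the proof.
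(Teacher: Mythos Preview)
Your architecture matches the paper's: pass to $F=W(f)$, $G=W(L(z,f))$, reduce to $F\equiv G$ by excluding the other M\"obius possibilities, then descend to $f\equiv L(z,f)$. Two places where your sketch is looser than what the argument actually requires.

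\medskip
\textbf{The main inequality.} The displayed Lahiri-type bound you wrote down is not sharp enough to reach $n=2m+3$; bounding each of $N_{2}(r,0;F)$, $N_{2}(r,\infty;F)$ by $2T(r,f)$ only yields $n\geq 9$. The paper squeezes out the needed margin from two ingredients you mention only in passing. First, the second fundamental theorem is applied with the $m$ extra targets $\beta_{j}$, which promotes the left side to $n\bigl(m+\tfrac12\bigr)\{T(r,f)+T(r,L(z,f))\}$ (Lemma~\ref{2.8}); the triple-root structure (\ref{e1.1f}) then bounds each $N_{2}(r,\beta_{j};F)$ by $(n-m)T(r,f)$. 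Second, because poles are shared only IM, the term $\ol N_{*}(r,\infty;F,G)$ is not a priori small; the paper controls it through the auxiliary
\[
V=\frac{F'}{F(F-1)}-\frac{G'}{G(G-1)}
\]
(Lemma~\ref{2.10}), which converts $\ol N(r,\infty;f\mid\geq 1)$ into zero-counting terms. Without this $V$-step the IM pole hypothesis is too weak to close the inequality at $m=1$, $n=5$.

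\medskip
\textbf{The descent from $F\equiv G$.} This is not a direct appeal to the strong-uniqueness-polynomial property of $P$: the relation $F\equiv G$ reads
\[
f^{n}\bigl(bL^{2m}+cL^{m}+d\bigr)=L^{n}\bigl(bf^{2m}+cf^{m}+d\bigr),
\]
which is \emph{not} of the form $P(f)\equiv cP(L)$. The paper instead sets $h=L(z,f)/f$, rewrites the identity as
\[
\Bigl(bf^{m}h^{m}(h^{n-2m}-1)+\tfrac{c}{2}(h^{n-m}-1)\Bigr)^{2}=\tfrac14\,\Phi(h),
\]
with $\Phi(h)=c^{2}(h^{n-m}-1)^{2}-4bd(h^{n}-1)(h^{n-2m}-1)$, and invokes Lemma~\ref{2.11} (from \cite{21}): under the hypothesis $c^{2}/(4bd)=n(n-2m)/(n-m)^{2}$, $\Phi$ has a quadruple zero at $1$ and all remaining $2n-2m-4$ zeros simple. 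A second-fundamental-theorem count on $h$ then forces $h$ constant, after which $\gcd(n,m)=1$ gives $h\equiv 1$. So either verify that \cite{2*} genuinely supplies the implication $W(f)\equiv W(g)\Rightarrow f\equiv g$ under the stated constraints, or run this $h$-argument explicitly.
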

Keeping in view of the {{Remark $\ref{r1.1}$}}, we can easily obtain the following corollary from the above theorem.
\begin{cor}\label{c1}
Suppose $S$ be defined as in the theorem \ref{c1}, $n(\geq2m+3),$ $m(\geq1)$ be two positive integers such that $gcd(n,m)=1$, $a,b,c,d$ are non zero complex numbers, $\displaystyle\frac{c^{2}}{4bd}=\displaystyle\frac{n(n-2m)}{(n-m)^{2}}\neq 1$ and $a\neq \gamma_{j}$ for $j=1,2,\ldots,m$. Let $f(z)$ be a transcendental meromorphic function of finite order and $p$ be a non-zero complex constant. Suppose $E_{f(z)}(S,3)=E_{L(z,f)}(S,3)$ and $E_{f(z)}(\{\infty\},0)=E_{L(z,f)}(\{\infty\},0)$, then $f(z)\equiv \Delta^{k}_{p}f.$
\end{cor}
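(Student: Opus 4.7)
The strategy is to reduce the problem to a statement about two auxiliary meromorphic functions and then invoke the second main theorem together with a standard weighted-sharing dichotomy. Define
$$F(z) := W(f(z)) = -\frac{a f(z)^{n}}{b f(z)^{2m} + c f(z)^{m} + d}, \qquad G(z) := W(L(z,f)),$$
with $W$ as in (\ref{e1.1g}). Since $W(z) - 1 = -P(z)/(bz^{2m}+cz^{m}+d)$, and since the numerator $P$ and the denominator have no common zero (else $z=0$, but $P(0) = d \neq 0$), the hypothesis $E_{f}(S,3) = E_{L(z,f)}(S,3)$ is equivalent to $F$ and $G$ sharing $1$ with weight $3$. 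Moreover, $E_{f}(\{\infty\},0) = E_{L(z,f)}(\{\infty\},0)$ forces the pole sets of $F$ and $G$ to agree IM, with the additional poles of $F$, $G$ lying above the zeros of $bz^{2m}+cz^{m}+d$, again in a matched way.

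Finite order of $f$ together with the Halburd--Korhonen--Chiang--Feng difference estimates gives $T(r, L(z,f)) = O(T(r,f))$ and $S(r, L(z,f)) = S(r,f)$. Combined with $T(r,F) = n\, T(r,f) + S(r,f)$ and the factorization (\ref{e1.1f}), the reduced counting function of the value $\beta_{j}$ for $F$ satisfies
$$\ol N(r,\beta_{j};F) \leq \tfrac{1}{3} N(r,\beta_{j};F) + (n-3m)\, T(r,f) + S(r,f),$$
and similarly for $G$, yielding non-trivial deficiency at the $m$ targets $\beta_{1},\dots,\beta_{m}$ in addition to the targets $1$ and $\infty$.

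Applying the second main theorem to $F$ and to $G$ at the $m+2$ values $1, \infty, \beta_{1},\dots,\beta_{m}$, and invoking the weighted-sharing lemmas of Banerjee--Lahiri type for $(F,G)$ sharing $(1,3)$, I expect to reach the dichotomy: either the resulting inequality contradicts transcendence of $f$ in the form $(n - 2m - 3)\, T(r,f) \leq S(r,f)$ (which is ruled out by $n \geq 2m+3$), or one of the identities $F \equiv G$ or $FG \equiv 1$ holds. The alternative $FG \equiv 1$ is excluded by comparing poles: the common pole set forces both $F$ and $G$ to be free of zeros and poles, contradicting $T(r,F) = n T(r,f) + S(r,f)$. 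Hence $F \equiv G$, which reads $f^{n}(b L^{2m}+c L^{m}+d) \equiv L^{n}(b f^{2m}+c f^{m}+d)$ with $L = L(z,f)$. Writing $L = h f$ and expanding, the resulting polynomial identity in $f$ together with $\gcd(n,m)=1$ and the non-degeneracy conditions on $a, b, c, d$ leaves only $h \equiv 1$, i.e. $f \equiv L(z,f)$.

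The principal obstacle I anticipate is the bookkeeping in the dichotomy step. Sharing of poles with weight $0$ introduces $\ol N_{*}(r, \infty; F, G)$ terms that must be absorbed into $S(r,f)$; this requires using the finite-order hypothesis and the precise shift structure of $L(z,f)$ (poles of $L(z,f)$ arise only from shifted poles of $f$) to ensure the needed cancellations. Equally delicate is tracking the contribution of the residual factor $R_{n-3m}(z)$ appearing in (\ref{e1.1f}): its zeros need not contribute only multiple $\beta_{j}$-points, and pushing the coefficient of $T(r,f)$ in the final inequality down to exactly $n-2m-3$ is what makes the weight $3$ (rather than a larger weight) hypothesis sufficient and reduces the cardinality of $S$ below the previous bound of $9$.
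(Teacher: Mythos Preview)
Your overall architecture matches the paper's: pass to $F=W(f)$, $G=W(L(z,f))$, exploit the $m$ deficient values $\beta_{j}$ coming from the factorisation (\ref{e1.1f}), reach a dichotomy, and finish by analysing $h=L(z,f)/f$. However, there are two genuine gaps.

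First, the inequality you aim for, $(n-2m-3)T(r,f)\le S(r,f)$, is empty at the boundary $n=2m+3$, which is precisely the case that distinguishes this result from earlier ones. The paper does not obtain that coefficient. Instead it introduces the auxiliary functions $H$ and $V$ and runs the argument in two layers: if $H\not\equiv 0$ then Lemma~\ref{2.8} (second main theorem at the $m+2$ targets combined with Lemmas~\ref{2.1}--\ref{2.3}) together with Lemma~\ref{2.10} (the $V$-function bound on $\ol N_{*}(r,\infty;f,L(z,f))$) yields an inequality whose leading coefficient is $\tfrac{n}{2}-2+m^{2}$ minus explicit small correction terms, and this is strictly positive already for $m=1$, $n=5$. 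Your plan to absorb $\ol N_{*}(r,\infty;F,G)$ using finite-order shift estimates is not what is done and is unlikely to give a sharp enough bound; the $V$-function device is what makes weight $0$ on the poles workable.

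Second, your treatment of the step $F\equiv G\Rightarrow f\equiv L(z,f)$ is too light. Writing $L=hf$ and expanding gives the relation $d(h^{n}-1)+cf^{m}h^{m}(h^{n-m}-1)+bf^{2m}h^{2m}(h^{n-2m}-1)=0$, and ruling out non-constant $h$ hinges on the structural Lemma~\ref{2.11}: under $\tfrac{c^{2}}{4bd}=\tfrac{n(n-2m)}{(n-m)^{2}}$ the polynomial $\Phi(h)=c^{2}(h^{n-m}-1)^{2}-4bd(h^{n}-1)(h^{n-2m}-1)$ has $h=1$ as a quadruple root and all remaining $2n-2m-4$ roots simple, so the second main theorem applied to $h$ at those simple roots gives $(n-m-3)T(r,h)\le S(r,h)$. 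The conditions $\gcd(n,m)=1$ and $a\neq\gamma_{j}$ alone do not force $h$ constant. (Your exclusion of $FG\equiv 1$ via the shared poles is fine in spirit; the paper argues slightly differently, via the $\delta_{j}$-points of $f$ having multiplicity at least $n$, but both work.)
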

\begin{rem}
    Next example shows in the main result, the polynomial $ P(z) $ can not be choose arbitrarily.
\end{rem}
\begin{exm} Since, minimum degree of the polynomial $ P(z) $ is $ 5 $, so let us consider $ P(z)=z^5-1 $. So, $ S=\{:P(z)=0\}=\{1,\theta,\theta^2,\theta^3,\theta^5 \}, $ where $ \theta=\displaystyle\cos\left(\frac{2\pi}{5}\right)+i\sin\left(\frac{2\pi}{5}\right). $ Let \beas f(z)=\left(1+\sqrt[k]{\theta}\right)^{z/c}\frac{\sin\left(\frac{2\pi z}{c}\right)}{e^{2\pi iz/c}-1}. \eeas Verify that $ \Delta^{k}_{c}f=\theta f $, and hence $ \Delta^{k}_{c}f $ and   $E_{f(z)}(S,3)=E_{\Delta^{k}_{c}f}(S,3)$ and $E_{f(z)}(\{\infty\},0)=E_{\Delta^{k}_{c}f}(\{\infty\},0)$, but $ f\not\equiv \Delta^{k}_{c}f. $
\end{exm}
\begin{rem}
    From the next example we see that, it is not necessary $ f $ to be of finite order in the main result.
\end{rem}
\begin{exm}
    Consider the function \beas f(z)=2^{z/c}{\frac{e^{\sin\left(\frac{2\pi z}{c}\right)}}{\sin\left(\frac{2\pi z}{c}\right)-1}}.\eeas Clearly, we see that order of $ f $ is infinite, and $E_{f(z)}(S,3)=E_{\Delta^{k}_{c}f}(S,3)$ and $E_{f(z)}(\{\infty\},0)=E_{\Delta^{k}_{c}f}(\{\infty\},0)$, also $ f\equiv \Delta^{k}_{c}f. $
\end{exm}
\begin{rem}
The following examples satisfy the conditions of theorem \ref{t1} and note that, it is not necessary to assume the  conditions (\ref{e1.1b}) or (\ref{e1.1c}) in order to get the uniqueness of $f(z)$ with $L(z,f)$.\par
\end{rem}
\begin{exm}\label{1.1}
Let $f(z)=e^{\frac{\log3}{p}z}$, where $p$ is a non-zero complex number. Let $L(z,f)=f(z+p)-2f(z)$. Clearly we have $L(z,f)=f(z)$.
\end{exm}
\begin{exm}\label{1.2}
Let $f(z)=e^{\frac{i\pi}{p}z}$, where $c$ is a non-zero complex number. Suppose $L(z,f)=-f(z-2p)+f(z-p)+3f(z)$. Then, clearly $L(z,f)=f(z)$.
\end{exm}
\begin{exm}\label{1.3}
Let $f(z)=e^{z}$ and $c_{1}$, $c_{2}$ and $c_{3}$ are three complex numbers. Suppose $L(z,f)=f(z+c_{3})+f(z+c_{2})+f(z+c_{1})-f(z)$. Then $L(z,f)=(-1+e^{c_{1}}+e^{c_{2}}+e^{c_{3}})f(z)$. If we choose $c_{1}$, $c_{2}$ and $c_{3}$ in such a way that $e^{c_{1}}+e^{c_{2}}+e^{c_{3}}=2$, then $L(z,f)=f(z)$.\end{exm}
\begin{exm}\label{1.4}
Let $f(z)=\frac{e^{z}}{1-\cos(\frac{i\pi}{p}z)}$, where $p$ is a non-zero complex number. Suppose $L(z,f)=2f(z+2ip)+f(z-2ip)-f(z)$. Then $L(z,f)=(-1+2e^{2ip}+e^{-2ip})f(z)$. Now let us choose $p\in \mathbb{C}$ such that $p=n\pi$, where $n=0,\pm 1,\pm 2,\ldots$. Then $L(z,f)=f(z)$.
\end{exm}

\section{Lemmas} In this section, we present some lemmas which will be needed in the sequel. Let us define three functions $F$, $G$ in $\mathbb{C}$ by  \be\label{e2.1} F=W(f)=-\frac{af^{n}}{bf^{2m}+cf^{m}+d}\ee and \be\label{e2.2}  G=W(L(z,f))=-\frac{aL(z,f)^{n}}{bL(z,f)^{2m}+cL(z,f)^{m}+d}.\ee We also  denote by $H$, $V$ the following functions \beas H=\Big(\frac{F^{''}}{F^{'}}-\frac{2F^{'}}{F-1}\Big)-\Big(\frac{G^{''}}{G^{'}}-\frac{2G^{'}}{G-1}\Big),\eeas \beas V=\frac{F^{'}}{F(F-1)}-\frac{G^{'}}{G(G-1)}.\eeas 
\begin{lem}\label{2.1}\cite{14}
Let $F$, $G$ be two non-constant meromorphic functions such that they share $(1,1)$ and $H\not\equiv 0.$ Then \beas N(r,1;F\mid=1)=N(r,1;G\mid=1)\leq N(r,H)+S(r,F)+S(r,G).\eeas
\end{lem}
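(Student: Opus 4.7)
The plan is to exploit the local behavior of $H$ at simple common $1$-points of $F$ and $G$. First, because $F$ and $G$ share $(1,1)$, every point $z_{0}$ where $F-1$ has a simple zero is simultaneously a point where $G-1$ has a simple zero, and vice versa. This immediately yields the equality $N(r,1;F\mid=1)=N(r,1;G\mid=1)$, so it remains only to establish the single inequality.

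For that inequality, I would examine the Laurent expansion of $H$ at such a simple common $1$-point $z_{0}$. Writing $F(z)=1+a_{1}(z-z_{0})+a_{2}(z-z_{0})^{2}+\cdots$ with $a_{1}\neq 0$, and similarly $G(z)=1+b_{1}(z-z_{0})+b_{2}(z-z_{0})^{2}+\cdots$ with $b_{1}\neq 0$, a direct but careful expansion gives
\beas \frac{F^{''}}{F^{'}}-\frac{2F^{'}}{F-1}=-\frac{2}{z-z_{0}}+O(z-z_{0}). \eeas
The crucial point is that the constant terms of $F^{''}/F^{'}$ and of $2F^{'}/(F-1)$ are both equal to $2a_{2}/a_{1}$ and therefore cancel, leaving no constant term in the difference. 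The analogous identity holds for $G$. Subtracting, the simple poles $-2/(z-z_{0})$ cancel between $F$ and $G$, and since both expressions already have zero constant term, $H$ is holomorphic at $z_{0}$ with $H(z_{0})=0$. Consequently every simple common $1$-point contributes to $N(r,0;H)$, whence
\beas N(r,1;F\mid=1)\leq N(r,0;H). \eeas

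To close the argument, I would apply the First Fundamental Theorem to obtain $N(r,0;H)\leq T(r,H)+O(1)=N(r,H)+m(r,H)+O(1)$, and then the logarithmic derivative lemma applied to each of $F^{''}/F^{'}$, $F^{'}/(F-1)$, $G^{''}/G^{'}$, $G^{'}/(G-1)$ to conclude $m(r,H)=S(r,F)+S(r,G)$. Combining these yields the claimed bound. The main technical step is the coefficient matching in the Laurent expansion, which reveals precisely why the weight $1$ in the sharing hypothesis is the exact amount needed to force the vanishing of $H$ at simple shared $1$-points; any weaker sharing would in general leave a nonzero constant term and destroy the bound.
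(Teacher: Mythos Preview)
The paper does not supply a proof of this lemma; it is quoted directly from Lahiri \cite{14} and used without argument. Your proof is correct and is precisely the standard argument underlying that citation: the local expansion showing that $F''/F'-2F'/(F-1)=-2/(z-z_{0})+O(z-z_{0})$ at a simple $1$-point (with the constant terms $2a_{2}/a_{1}$ cancelling as you note), the identical expansion for $G$, the resulting zero of $H$ at every simple shared $1$-point, and the passage $N(r,0;H)\leq N(r,H)+m(r,H)+O(1)$ combined with $m(r,H)=S(r,F)+S(r,G)$ from the logarithmic-derivative lemma. One minor remark on your closing comment: under weight-$0$ sharing the failure is more drastic than a nonzero constant term---if $F$ has a simple $1$-point where $G$ has a multiple one, the principal parts $-2/(z-z_{0})$ and $-(m+1)/(z-z_{0})$ no longer cancel and $H$ acquires a genuine pole there, not merely a nonzero value.
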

\begin{lem}\label{2.2}\cite{2}
 Let $F,$ $G$ be two non-constant meromorphic functions sharing $(1,t),$ where $1\leq t<\infty.$ Then \beas && \ol N(r,1;F) + \ol N(r,1;G)-\ol N(r,1;F \mid= 1) +(t-\frac{1}{2})\ol N_{*}(r,1;F,G) \\&\leq& \frac{1}{2} [N(r,1;F) + N(r,1;G)].\eeas
\end{lem}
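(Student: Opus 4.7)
The plan is to prove the inequality pointwise: both sides are sums of (reduced or unreduced) counting functions, so it suffices to compare, at each common $1$-point $z_0$ of $F$ and $G$, the local contribution to the left-hand side with that to the right-hand side. The hypothesis that $F$ and $G$ share $(1,t)$ with $t\geq 1$ guarantees that every $1$-point of $F$ is a $1$-point of $G$ and vice versa, so this accounting is exhaustive; summing the local inequalities (equivalently, integrating the associated counting functions over circles $|z|=r$) then gives the lemma.

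Fix a common $1$-point $z_0$ and write $p,q\geq 1$ for the orders of vanishing of $F-1$ and $G-1$ at $z_0$. Weighted sharing $(1,t)$ with $t\geq 1$ forces $p=q$ whenever $\min(p,q)\leq t$; in particular the simple $1$-points of $F$ and $G$ coincide exactly. I will handle three exhaustive cases. If $p=q=1$, the local contributions to $\ol N(r,1;F)$, $\ol N(r,1;G)$ and $\ol N(r,1;F\mid=1)$ are each $1$ and the contribution to $\ol N_{*}(r,1;F,G)$ is $0$; the local LHS equals $1+1-1+0=1$ and the local RHS equals $\tfrac12(1+1)=1$. If $p=q\geq 2$, the local LHS equals $1+1=2$, while the local RHS equals $\tfrac12(p+q)=p\geq 2$. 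If $p\neq q$, then weighted sharing forces both $p>t$ and $q>t$, so $p,q\geq t+1$ are distinct positive integers and hence $p+q\geq (t+1)+(t+2)=2t+3$; the local LHS equals $1+1+(t-\tfrac12)=t+\tfrac32$, while the local RHS is $\tfrac12(p+q)\geq t+\tfrac32$.

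Thus in every case the local LHS is bounded above by the local RHS, and summing over common $1$-points yields the lemma. The one step requiring real care is the third case: one must combine $p,q\geq t+1$ (from weighted sharing) with $p\neq q$ to obtain $p+q\geq 2t+3$, rather than the naive $2t+2$. This strict inequality is exactly what calibrates the coefficient $t-\tfrac12$ in front of $\ol N_{*}(r,1;F,G)$ against the minimal possible value of $\tfrac12(p+q)$ on exceptional $1$-points, and incidentally shows the bound to be sharp on such points of minimal multiplicity.
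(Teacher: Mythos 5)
Your proof is correct: the local multiplicity accounting at each common $1$-point (using that weighted sharing $(1,t)$ forces equal multiplicities unless both exceed $t$, whence $p+q\geq 2t+3$ when $p\neq q$) is exactly the standard argument for this inequality, and the paper itself only cites the lemma from Banerjee's earlier work without reproducing a proof. The key calibration you identify in the third case — distinctness upgrading $2t+2$ to $2t+3$ to match the coefficient $t-\tfrac12$ — is indeed the crux, and your case analysis is exhaustive and correctly summed into the integrated counting functions.
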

\begin{lem}\label{2.3}
Suppose $F$, $G$ share $(1,0)$, $(\infty,0)$ and $\beta_{j}$ be defined as in $(\ref{e1.1f})$ are non-zero complex numbers. If $H\not \equiv 0,$ then, \beas  N(r,H)&&\leq N(r,0;F \mid\geq 2) + N(r,0;G \mid\geq 2) + \sum_{j=1}^{m}\ol N(r,\beta_{j};F \mid\geq 2) \\&&+\sum_{j=1}^{m}\ol N(r,\beta_{j};G \mid\geq 2) +\ol N_{*}(r,1;F,G) + \ol N_{*}(r,\infty;F,G) + \ol N_{0}(r,0;F^{'}) \\&&+ \ol N_{0}(r,0;G^{'})+S(r,F)+S(r,G),\eeas where $\ol N_{0}(r,0;F^{'})$ is the reduced counting function of those zeros of $F^{'}$ which are not the zeros of $F(F -1)\prod_{j=1}^{m}(F-\beta_{j})$ and $\ol N_{0}(r,0;G^{'})$ is similarly defined.
\end{lem}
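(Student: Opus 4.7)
The plan is to estimate $N(r,H)$ by a direct case-by-case local analysis of the possible poles of $H$. First I would observe that each of the two bracketed expressions in the definition of $H$ involves only a ratio $F''/F'$ and a simple logarithmic derivative $F'/(F-1)$, so a local Laurent expansion shows that $H$ has at most simple poles, and such poles can only occur at: (i) multiple zeros of $F$ or $G$; (ii) 1-points of $F$ (equivalently $G$, since they share $1$ IM); (iii) poles of $F$ and $G$; (iv) multiple $\beta_{j}$-points of $F$ or $G$; and (v) zeros of $F'$ (resp.\ $G'$) that are not zeros of $F(F-1)\prod_{j=1}^{m}(F-\beta_{j})$ (resp.\ $G(G-1)\prod_{j=1}^{m}(G-\beta_{j})$).

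Next I would carry out the residue computation at each of these locations. At a zero of $F$ of order $p\geq 2$, the term $F''/F'$ contributes a simple pole of residue $p-1$ while $-2F'/(F-1)$ is holomorphic, accounting for the $N(r,0;F\mid\geq 2)$ contribution, with a symmetric statement for $G$. At a multiple $\beta_{j}$-point of $F$, since $\beta_{j}\notin\{0,1,\infty\}$ (verifiable from the construction in $(\ref{e1.1f})$), only $F''/F'$ has a simple pole, and summing over $j$ yields $\sum_{j=1}^{m}\ol N(r,\beta_{j};F\mid\geq 2)$; likewise for $G$. Any zero of $F'$ that is not a multiple value at $0,1$ or some $\beta_{j}$ is a simple pole of $F''/F'$, giving $\ol N_{0}(r,0;F')$, and similarly for $G'$.

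The most delicate part is the analysis at shared 1-points and shared poles. At a common 1-point where $F$ has multiplicity $q$ and $G$ has multiplicity $q'$, a direct computation shows the $F$-side contributes residue $(q-1)-2q=-(q+1)$ to $H$, while the $G$-side contributes $+(q'+1)$; these cancel iff $q=q'$, so only the locations with $q\neq q'$ survive and these are exactly counted by $\ol N_{*}(r,1;F,G)$. The parallel calculation at a common pole of orders $p,p'$ yields $F$-side residue $p-1$ and $G$-side residue $-(p'-1)$, so only $p\neq p'$ survives, contributing $\ol N_{*}(r,\infty;F,G)$. In particular simple shared 1-points and simple shared poles give no pole of $H$.

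Summing the contributions from all cases yields the stated bound. The hardest step is the combinatorial bookkeeping at the shared 1-points and poles, namely verifying that the residues from the $F$-side and $G$-side terms cancel exactly when multiplicities agree under the weight-0 hypothesis, and ensuring no pole is double-counted among the $N(r,0;F\mid\geq 2)$, the $\ol N_{0}(r,0;F')$, and the $\sum_{j=1}^{m}\ol N(r,\beta_{j};F\mid\geq 2)$ terms (since a multiple zero, multiple 1-point, or multiple $\beta_{j}$-point is also a zero of $F'$, the $\ol N_{0}$ term is defined so as to \emph{exclude} precisely these loci). The $S(r,F)+S(r,G)$ slack absorbs any small-function contributions that may enter through the coefficients $b_{k}(z)$ of $L(z,f)$ in the logarithmic derivatives.
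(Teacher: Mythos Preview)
Your proposal is correct and follows essentially the same approach as the paper: both arguments identify the possible simple poles of $H$ by local analysis and bound $N(r,H)$ by summing the corresponding reduced counting functions. Your version is more explicit in computing the residues (especially the cancellation at equal-multiplicity shared $1$-points and poles), whereas the paper simply lists the six types of poles and asserts simplicity, but the substance is the same.
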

\begin{proof}
 By the definition of $H$ we verify that the possible poles of $H$ occur from the following six cases: (i) The multiple zeros of $F$ and $G$. (ii) The multiple $\beta_{j}$- points of $F$ and $G$ for each $j=0,1,2,\ldots,m-1$. (iii) Those common poles of $F$ and $G,$ where each such pole of $F$ and $G$ has different multiplicities related to $F$ and $G$. (iv) Those common $1$-points of $F$ and $G,$ where each such point has different multiplicities related to $F$ and $G$. (v) The zeros of $f^{'}$ which are not zeros of $F(F-1)\prod_{j=1}^{m}(F-\beta_{j})$. (vi) The zeros of $G^{'}$ which are not zeros of $G(G-1)\prod_{j=1}^{m}(G-\beta_{j})$. Since all the poles of $H$ are simple, the lemma follows.
\end{proof}
\begin{lem}\label{2.5}\cite{8}
Let $f$ be a non-constant meromorphic function and $P(f)=a_0+a_1f+a_2f^{2}+\ldots +a_{n}f^{n},$ where $a_0,a_1,a_2,\ldots, a_{n}$ are constants and $a_{n}\neq 0$. Then $$T(r,P(f))=nT(r,f)+O(1).$$
\end{lem}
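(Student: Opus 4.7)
The plan is to split $T(r, P(f)) = m(r, P(f)) + N(r, P(f))$ and verify separately that each piece equals $n$ times the corresponding piece of $T(r,f)$, up to an $O(1)$ error. This two-pronged attack avoids the circularity of trying to extract a lower bound from the subadditivity of $T$ alone.

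For the counting function, every pole of $P(f)$ must be a pole of $f$, since $P$ has constant coefficients, and conversely a pole of $f$ of multiplicity $p$ produces a pole of $P(f)$ of multiplicity exactly $np$: the leading term $a_n f^n$ contributes a pole of order $np$, while each lower term $a_k f^k$ with $k<n$ contributes a pole of strictly smaller order $kp$, so cancellation at the leading order is impossible. Hence $N(r, P(f)) = n\, N(r,f) + O(1)$.

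For the proximity function, the upper bound is immediate from the elementary estimate $|P(w)| \leq C(1+|w|)^n$ with $C$ depending only on the $a_k$, which gives $\log^+|P(f(z))| \leq n\log^+|f(z)| + O(1)$ and thus $m(r, P(f)) \leq n\, m(r,f) + O(1)$. For the matching lower bound, since $P(w)/w^n \to a_n \neq 0$ as $|w|\to\infty$, I would choose $M>1$ so that $|P(w)| \geq \tfrac{1}{2}|a_n|\,|w|^n$ whenever $|w|\geq M$; then on the part of $\{|z|=r\}$ where $|f(z)|\geq M$ one has $n\log^+|f(z)| \leq \log^+|P(f(z))| + O(1)$, while on the complementary part $n\log^+|f(z)|$ is bounded by $n\log M$. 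Integrating yields $n\, m(r,f) \leq m(r, P(f)) + O(1)$, and adding the two proved equalities gives $T(r, P(f)) = nT(r,f) + O(1)$. The only genuine subtlety is the pole-multiplicity bookkeeping in the $N$-estimate, which is dispatched by the observation that the principal part at a pole of $f$ is determined exclusively by the top-degree summand $a_n f^n$, so no deep Nevanlinna machinery beyond the definitions of $m(r,\cdot)$ and $N(r,\cdot)$ is required.
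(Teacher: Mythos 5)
Your proof is correct. Note that the paper itself offers no argument for this lemma: it is quoted verbatim from Yang's 1972 paper (reference [8]), a special case of the Valiron--Mokhon'ko theorem, so there is nothing in the source to compare against line by line. Your route --- splitting $T=m+N$, getting $N(r,P(f))=n\,N(r,f)$ exactly from the observation that the principal part at a pole of $f$ of order $p$ is governed solely by $a_nf^n$ and hence has order exactly $np$, and squeezing $m(r,P(f))$ between $n\,m(r,f)+O(1)$ from above (via $|P(w)|\le C(1+|w|)^n$) and from below (via $|P(w)|\ge\tfrac12|a_n||w|^n$ for $|w|\ge M$, splitting the circle of integration accordingly) --- is the standard elementary proof of this constant-coefficient polynomial case, and every step checks out; the only implicit choice worth making explicit is taking $M$ large enough that $\tfrac12|a_n|M^n\ge 1$, so that $\log^+|P(f)|=\log|P(f)|$ on the set where $|f|\ge M$. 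An alternative often seen in the literature proves the same statement by induction on $\deg P$, writing $P(f)=f\cdot Q(f)+a_0$ and using the first fundamental theorem; your direct estimate is, if anything, more transparent and fully self-contained.
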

\begin{lem}\label{2.8}
Let $F,$ $G$ be given by $(\ref{e2.1})$ and $(\ref{e2.2})$, where $n(\geq 2m+3)$ be an integer and $H\not\equiv 0$.  Suppose that $F,$ $G$ share $(1,t)$ and $f$, $L(z,f)$ share $(\infty,k),$ where $2\leq t<\infty$. Then, for the complex numbers $\beta_{j}$ as given by $(\ref{e1.1g})$, we have  \beas &&n\left(m+\frac{1}{2}\right)\{T(r,f)+T(r,L(z,f))\}\\&\leq&\; 2\{\ol N(r,0;f)+\ol N(r,0;L(z,f))\}+\ol N(r,\infty;f)+\ol N(r,\infty;L(z,f))\\&&+\sum_{j=1}^{m}N_{2}(r,\beta_{j};F)+\sum_{j=1}^{m}N_{2}(r,\beta_{j};G)+\ol N_{*}(r,\infty;f,L(z,f))\\&&-\left(t-\frac{3}{2}\right)\ol N_{*}(r,1;F,G)+S(r,f)+S(r,L(z,f)).\eeas
\end{lem}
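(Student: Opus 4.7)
The plan is to apply Nevanlinna's Second Main Theorem (SMT) to $F$ and $G$ with the $m+3$ target values $\{0,1,\infty,\beta_{1},\ldots,\beta_{m}\}$, and then to refine the resulting estimate by exploiting the sharing hypothesis via Lemmas~\ref{2.1}, \ref{2.2} and \ref{2.3}. The targets are pairwise distinct: $\beta_{j}\neq 0,\infty$ because $a\gamma_{j}\neq 0$, and $\beta_{j}\neq 1$ is equivalent to $a\neq\gamma_{j}$, which is part of the hypothesis. Since $n>2m$, Lemma~\ref{2.5} together with the standard Nevanlinna characteristic formula for rational functions of $f$ gives $T(r,F)=nT(r,f)+S(r,f)$ and $T(r,G)=nT(r,L(z,f))+S(r,L(z,f))$. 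Applying SMT to $F$ and adding the analogous inequality for $G$ yields an inequality with $(m+1)[T(r,F)+T(r,G)]$ on the left and, on the right (modulo $S$-terms), the sum $\ol{N}(r,0;F)+\ol{N}(r,0;G)+\ol{N}(r,\infty;F)+\ol{N}(r,\infty;G)+\ol{N}(r,1;F)+\ol{N}(r,1;G)+\sum_{j}[\ol{N}(r,\beta_{j};F)+\ol{N}(r,\beta_{j};G)]-N_{0}(r,0;F')-N_{0}(r,0;G')$.

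Next I would apply Lemma~\ref{2.2}, which is valid since $t\geq 2$, to bound $\ol{N}(r,1;F)+\ol{N}(r,1;G)$ above by $\ol{N}(r,1;F|=1)+\tfrac{1}{2}[N(r,1;F)+N(r,1;G)]-(t-\tfrac{1}{2})\ol{N}_{*}(r,1;F,G)$. Using $N(r,1;F)\leq T(r,F)+O(1)$ and similarly for $G$, I absorb $\tfrac{1}{2}[T(r,F)+T(r,G)]$ into the left side, sharpening the coefficient from $m+1$ to $m+\tfrac{1}{2}$. Lemma~\ref{2.1} then controls the remaining $\ol{N}(r,1;F|=1)$ by $N(r,H)+S$, and substituting the estimate for $N(r,H)$ from Lemma~\ref{2.3} produces three simultaneous effects: (i) the $\ol{N}_{0}(r,0;F')+\ol{N}_{0}(r,0;G')$ contributions absorb the $-N_{0}$-terms from SMT; (ii) the multiple-point counts for $0$ and the $\beta_{j}$'s combine with the $\ol{N}$-terms from SMT into the $N_{2}(r,\beta_{j};\cdot)$ expressions in the conclusion, together with a $2\ol{N}(r,0;\cdot)$ contribution for zeros; and (iii) the extra $+\ol{N}_{*}(r,1;F,G)$ appearing in Lemma~\ref{2.3} combines with the $-(t-\tfrac{1}{2})\ol{N}_{*}$ from Lemma~\ref{2.2} to give the coefficient $-(t-\tfrac{3}{2})$ stated in the lemma. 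The term $\ol{N}_{*}(r,\infty;F,G)$ from Lemma~\ref{2.3} also appears at this point.

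Finally I would translate the $F,G$-counts back to counts on $f$ and $L(z,f)$. Since $d\neq 0$, zeros of $F$ coincide with zeros of $f$, each of multiplicity at least $n\geq 5$, so $\ol{N}(r,0;F)=\ol{N}(r,0;F|\geq 2)=\ol{N}(r,0;f)$, producing the required $2\ol{N}(r,0;f)$, and symmetrically for $G$. The $\beta_{j}$-counts are retained as $\sum_{j}N_{2}(r,\beta_{j};F)$ and $\sum_{j}N_{2}(r,\beta_{j};G)$. For pole terms, the $(\infty,k)$-sharing of $f$ and $L(z,f)$ matches the genuine poles, giving $\ol{N}(r,\infty;f)+\ol{N}(r,\infty;L(z,f))$ with the discrepancy captured by $\ol{N}_{*}(r,\infty;f,L(z,f))$. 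Combining everything and invoking $T(r,F)+T(r,G)=n[T(r,f)+T(r,L(z,f))]+S$ yields the asserted inequality. I expect the main technical obstacle to lie precisely in this final pole bookkeeping: poles of $F$ include not only poles of $f$ but also $f$-preimages of the $2m$ simple roots of $bz^{2m}+cz^{m}+d$, and one must verify that these additional contributions, together with the analogous ones for $G$, do not generate counting terms beyond those recorded in the conclusion — which forces a careful use of the $(\infty,k)$-sharing of $f,L(z,f)$ inherited by the denominator structure of $F$ and $G$.
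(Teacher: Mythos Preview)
Your plan coincides with the paper's proof: apply the Second Fundamental Theorem to $F$ and $G$ for the values $0,1,\infty,\beta_{1},\ldots,\beta_{m}$, combine with Lemmas~\ref{2.1}, \ref{2.2}, \ref{2.3} exactly as you describe, absorb $\tfrac{1}{2}[T(r,F)+T(r,G)]$ to pass from $m+1$ to $m+\tfrac{1}{2}$, and then use Lemma~\ref{2.5} to rewrite $T(r,F),T(r,G)$ as $nT(r,f),nT(r,L(z,f))$.

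Regarding your final concern: the paper does not address it either. In the paper's argument the term $\ol N(r,\infty;F)$ coming from the Second Fundamental Theorem is replaced directly by $\ol N(r,\infty;f)$ (and likewise for $G$), and $\ol N_{*}(r,\infty;F,G)$ is replaced by $\ol N_{*}(r,\infty;f,L(z,f))$, without accounting for the additional poles of $F$ arising from the $\delta_{j}$-points of $f$ (the zeros of $bz^{2m}+cz^{m}+d$). So the potential discrepancy you flag is a genuine loose end in the paper's own presentation, not something your outline is missing relative to it; your proof sketch is in fact more scrupulous on this point than the source.
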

\begin{proof}
Using the second fundamental theorem of Nevalinna, we get \beas&& (m+1)\{T(r,F)+T(r,G)\}\\&\leq& \ol N(r,0;F)+\ol N(r,1;F)+\ol N(r,\infty;F)+\sum_{j=1}^{m}\ol N(r,\beta_{j};F)+\ol N(r,0;G)\\&&+\ol N(r,1;G)+\ol N(r,\infty;G)+\sum_{j=1}^{m}\ol N(r,\beta_{j};G)-\ol N_{0}(r,0;F^{'})-\ol N_{0}(r,0;G^{'})\\&&+S(r,F)+S(r,G).\eeas

Now using Lemma \ref{2.1}, Lemma \ref{2.2}, Lemma \ref{2.3} and Lemma \ref{2.5}, we have \beas &&\left(m+\frac{1}{2}\right)\{T(r,F)+T(r,G)\}\\&\leq&\;N_{2}(r,0;F)+ N_{2}(r,0;G)+\sum_{j=1}^{m}N_{2}(r,\beta_{j};F)+\sum_{j=1}^{m}N_{2}(r,\beta_{j};G)+\ol N(r,\infty;f)\\&&+\ol N(r,\infty;L(z,f))+\ol N_{*}(r,\infty;F,G_{1})-\left(t-\frac{3}{2}\right)\ol N_{*}(r,1;F,G)+S(r,F)\\&&+S(r,G).\eeas

 i.e.,\beas &&n\left(m+\frac{1}{2}\right)\{T(r,f)+T(r,L(z,f))\}\\&\leq&\; 2\{\ol N(r,0;f)+\ol N(r,0;L(z,f))\}+\ol N(r,\infty;f)+\ol N(r,\infty;L(z,f))\\&&+\sum_{j=1}^{m}N_{2}(r,\beta_{j};F)+\sum_{j=1}^{m}N_{2}(r,\beta_{j};G)+\ol N_{*}(r,\infty;f,L(z,f))\\&&-(t-\frac{3}{2})\ol N_{*}(r,1;F,G)+S(r,f)+S(r,L(z,f)).\eeas
\end{proof}
\begin{lem}\label{2.10}
Let $F,$ $G$ be given by $(\ref{e2.1})$ and $(\ref{e2.2})$, $n\geq8$ is an integer and $V \not\equiv 0$. If $F,$ $G$ share $(1,2)$, and $f,$ $L(z,f)$ share $(\infty,k),$ where $0\leq k<\infty,$ then the poles of $F$ and $G$ are zeros of $V$ and \beas &&(nk+n-1)\ol N(r,\infty;f \mid\geq k+1)\\&=&(nk+n-1)\ol N(r,\infty;L(z,f))\mid\geq k+1)\\&\leq& \ol N(r,0;f)+\ol N(r,0;L(z,f))+\ol N_{*}(r,1;F,G)+S(r,f)+S(r,L(z,f)).\eeas
\end{lem}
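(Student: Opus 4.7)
The strategy is to bound $T(r, V)$ from above and to combine this with a lower bound on $N(r, 0; V)$ that comes from the high-order zeros $V$ is forced to have at poles of $f$ (equivalently, of $L(z, f)$) of multiplicity at least $k+1$.

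First I would handle $m(r, V)$. Writing $V = \frac{F'}{F - 1} - \frac{F'}{F} - \frac{G'}{G - 1} + \frac{G'}{G}$ and applying the logarithmic derivative lemma to each summand -- together with Lemma \ref{2.5}, which gives $T(r, F) = n T(r, f) + O(1)$ and the analogue for $G$, and hence converts $S(r, F)$ and $S(r, G)$ into $S(r, f)$ and $S(r, L(z, f))$ -- immediately yields $m(r, V) = S(r, f) + S(r, L(z, f))$.

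Next I would control $N(r, V)$. The key computational tool is the identity
\beas \frac{F'}{F(F-1)} = -\frac{f' R(f)}{f P(f)}, \qquad R(f) := (n - 2m) b f^{2m} + (n - m) c f^m + n d, \eeas
obtained by differentiating $F = -af^n/(bf^{2m} + cf^m + d)$ and using $F - 1 = -P(f)/(bf^{2m} + cf^m + d)$, together with its counterpart for $G$. Inspection shows that $V$ can have simple poles only at zeros of $f$ (contributing $\ol N(r, 0; f)$), zeros of $L(z, f)$ (contributing $\ol N(r, 0; L(z, f))$), and those common $1$-points of $F$ and $G$ whose multiplicities differ; the common $1$-points of equal multiplicity $\le 2$ -- all that the sharing $(1, 2)$ guarantees -- have cancelling residues in $V$. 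At the remaining candidate locations, namely the poles of $F$ and $G$, the right-hand side of the displayed identity is in fact holomorphic: at a pole of $f$ this is clear from the Laurent expansion of the quotient, and at an extra pole of $F$ coming from $bf^{2m} + cf^m + d = 0$ the hypothesis $c^2/(4bd) \neq 1$ guarantees that $R(f)$ does not vanish at the corresponding value of $f$. This gives
\beas N(r, V) \le \ol N(r, 0; f) + \ol N(r, 0; L(z, f)) + \ol N_{*}(r, 1; F, G).\eeas

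The main step -- and the principal technical obstacle -- is the local analysis at a pole of $f$ of multiplicity $p \ge k + 1$. By the sharing hypothesis $(\infty, k)$, such a point is simultaneously a pole of $L(z, f)$ of some multiplicity $q \ge k + 1$, so both $F'/(F(F-1))$ and $G'/(G(G-1))$ vanish there; this already proves the first assertion that the poles of $F$ and $G$ are zeros of $V$. A careful Laurent-expansion comparison of the two summands of $V$ at such a point, split into the subcases $p = q$ (where the leading coefficients must be tracked to locate the first non-zero term of the difference) and $p \ne q$ (where the summand of lower exponent dominates), then shows that $V$ has a zero of order at least $nk + n - 1$ there. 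Combining with the first fundamental theorem,
\beas (nk + n - 1)\,\ol N(r, \infty; f \mid \ge k + 1) \le N(r, 0; V) \le T(r, V) = m(r, V) + N(r, V),\eeas
yields the stated inequality; the equality on the far left follows because the weighted sharing $(\infty, k)$ puts the poles of $f$ of multiplicity $\ge k+1$ into bijection with the poles of $L(z, f)$ of multiplicity $\ge k+1$ at identical locations.
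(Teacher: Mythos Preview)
Your proof is correct and follows essentially the same skeleton as the paper's: bound $m(r,V)$ via the logarithmic derivative lemma, bound $N(r,\infty;V)$ by locating the possible poles of $V$, and observe that at each pole of $f$ of multiplicity $\ge k+1$ the function $V$ vanishes to order at least $nk+n-1$, then combine via $N(r,0;V)\le T(r,V)+O(1)$. Your explicit identity $F'/(F(F-1))=-f'R(f)/(fP(f))$ is a more detailed justification than the paper offers for why poles of $F$ never produce poles of $V$, and your case split $p=q$ versus $p\ne q$ is harmless but unnecessary: since each summand of $V$ already has a zero of order $\ge n(k+1)-1$ at such a point, so does their difference, irrespective of any further cancellation.
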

\begin{proof}
Since $f(z),$ $L(z,f)$ share $(\infty;k)$, it follows that $F,$ $G$ share $(\infty;nk)$ and so a pole of $F$ with multiplicity $p(\geq nk + 1)$ is a pole of $G$ with multiplicity $r(\geq nk + 1)$ and vice versa. We note that $F$ and $G$ have no pole of multiplicity $q$ where $nk < q < nk + n$.
Now using the Milloux theorem [\cite{17}, p. 55], we get from the definition of $V$, \beas m(r,V)=S(r,f(z))+S(r,L(z,f)).\eeas

Hence \beas &&(nk+n-1)\ol N(r,\infty;f\mid\geq k+1)\\&=&(nk+n-1)\ol N(r,\infty;L(z,f)\mid\geq k+1)\\&&\leq\; N(r,0;V)\\&&\leq\; T(r,V_{1})+O(1)\\&&\leq\; N(r,\infty;V_{1})+m(r,V)+O(1)\\&&\leq\;\ol N(r,0;F)+\ol N(r,0;G)+\ol N_{*}(r,1;F,G)+S(r,f(z))+S(r,L(z,f))\\&&\leq\; \ol N(r,0;f)+\ol N(r,0;L(z,f))+\ol N_{*}(r,1;F,G_{1})+S(r,f)+S(r,L(z,f)).\eeas
\end{proof}
\begin{lem}\label{2.11}\cite{21}
Let $m(\geq 1)$ and $n(> 2m)$ be two positive integers, A, B are non zero complex numbers such that $\displaystyle\frac{A}{B}=\displaystyle\frac{(n-m)^{2}}{n(n-2m)}$. Then the polynomial $$\Phi(h)=A(h^{n}-1)(h^{n-2m}-1)-B(h^{n-m}-1)^{2}$$ of degree $2n-2m$ has one zero of multiplicity $4$ which is 1 and all other zeros are simple.
\end{lem}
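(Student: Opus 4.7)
The plan is to prove the lemma in three parts: verify the degree, establish the exact multiplicity four at $h=1$, and rule out any other multiple zero. Throughout, write $\Phi = AP_1P_2 - BP_3^2$ where $P_1 = h^n-1$, $P_2 = h^{n-2m}-1$, $P_3 = h^{n-m}-1$, which all vanish at $1$. The coefficient of $h^{2n-2m}$ in $\Phi$ equals $A-B$, and the hypothesis $A/B = (n-m)^2/[n(n-2m)]$ gives $A - B = Bm^{2}/[n(n-2m)] \neq 0$, so $\deg\Phi = 2n-2m$.

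For the multiplicity at $h=1$, I would apply Leibniz's rule to $\Phi^{(k)}(1)$ using $P_i^{(j)}(1)=k_i(k_i-1)\cdots(k_i-j+1)$ with $k_1=n$, $k_2=n-2m$, $k_3=n-m$. Vanishing of $\Phi(1)$ and $\Phi'(1)$ is immediate. For $\Phi''(1)$ and $\Phi'''(1)$ the terms collapse after collection into multiples of $An(n-2m)-B(n-m)^2$, which is zero by hypothesis. A longer but mechanical Leibniz computation yields $\Phi^{(4)}(1) = 2Bm^{2}(n-m)^{2}$, nonzero since $B\ne 0$ and $n>2m\geq 2$. Hence $1$ is a zero of order exactly four.

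The heart of the proof, and the main obstacle, is showing every other zero is simple. Suppose $h_0\ne 1$ satisfies $\Phi(h_0)=\Phi'(h_0)=0$. First $P_3(h_0)\ne 0$: otherwise $h_0^{n-m}=1$ with $\Phi(h_0)=0$ forces $h_0^{n}=1$ or $h_0^{n-2m}=1$; in either case $h_0^{m}=1$, and under the paper's standing assumption $\gcd(n,m)=1$ this gives $h_0=1$, a contradiction. Hence $P_1(h_0)P_2(h_0)P_3(h_0)\ne 0$, and interpreting $\Phi'(h_0)=0$ as a logarithmic derivative identity (multiplying by $h_0$ and using $h_0^k=P_k(h_0)+1$) yields
\begin{equation*}
\frac{n}{P_1(h_0)}+\frac{n-2m}{P_2(h_0)}=\frac{2(n-m)}{P_3(h_0)}.
\end{equation*}
Clearing denominators and substituting $P_1P_2=(B/A)P_3^{2}=n(n-2m)(n-m)^{-2}P_3^{2}$ gives the linear relation $nP_2(h_0)+(n-2m)P_1(h_0)=\frac{2n(n-2m)}{n-m}P_3(h_0)$. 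Squaring this and subtracting $4n(n-2m)P_1(h_0)P_2(h_0)$, which equals the squared right side under the same substitution, produces the perfect square $[nP_2(h_0)-(n-2m)P_1(h_0)]^{2}=0$. Solving the resulting linear system isolates $P_1(h_0)=\frac{n}{n-m}P_3(h_0)$ and $P_2(h_0)=\frac{n-2m}{n-m}P_3(h_0)$.

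Setting $\tau=P_3(h_0)\ne 0$, so that $h_0^{n-m}=1+\tau$, $h_0^{n}=1+\frac{n}{n-m}\tau$ and $h_0^{n-2m}=1+\frac{n-2m}{n-m}\tau$, the trivial identity $h_0^{n}\cdot h_0^{n-2m}=(h_0^{n-m})^{2}$ becomes
\begin{equation*}
\Bigl(1+\tfrac{n}{n-m}\tau\Bigr)\Bigl(1+\tfrac{n-2m}{n-m}\tau\Bigr)=(1+\tau)^{2},
\end{equation*}
which after expansion collapses to $-m^{2}\tau^{2}/(n-m)^{2}=0$. This forces $\tau=0$, contradicting $P_3(h_0)\ne 0$. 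Thus every zero of $\Phi$ distinct from $1$ is simple, and counting degrees yields exactly $2n-2m-4$ such simple zeros, completing the proof. The key algebraic insight—spotting that the combination of the linear relation with $AP_1P_2=BP_3^{2}$ produces the vanishing perfect square $[nP_2-(n-2m)P_1]^{2}$—is the step that drives the argument; everything else is bookkeeping.
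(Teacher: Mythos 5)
Your proof is correct, but note that the paper itself offers no argument for this lemma: it is quoted verbatim from reference \cite{21} (Banerjee--Mallik), so there is no in-paper proof to compare against. On the merits, your three steps all check out: the leading coefficient $A-B=Bm^{2}/[n(n-2m)]\neq 0$ gives the degree; your value $\Phi^{(4)}(1)=2Bm^{2}(n-m)^{2}$ is what a direct expansion of $\Phi$ about $h=1$ yields (the coefficient of $(h-1)^{4}$ is $Bm^{2}(n-m)^{2}/12$), and $\Phi''(1)=\Phi'''(1)=0$ do reduce to multiples of $An(n-2m)-B(n-m)^{2}$; and the simplicity argument is sound --- the logarithmic-derivative identity, the perfect square $[nP_{2}-(n-2m)P_{1}]^{2}=0$, and the final contradiction $-m^{2}\tau^{2}/(n-m)^{2}=0$ all verify. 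One point deserves emphasis: the lemma \emph{as stated in this paper} omits the hypothesis $\gcd(n,m)=1$, and without it the conclusion is false --- if $e=\gcd(n,m)>1$, any primitive $e$-th root of unity $h_{0}\neq 1$ annihilates all three of $h^{n}-1$, $h^{n-m}-1$, $h^{n-2m}-1$ and is therefore a multiple zero of $\Phi$ distinct from $1$. You correctly detected that your exclusion of $P_{3}(h_{0})=0$ needs this coprimality and imported it from the paper's standing assumptions; strictly speaking it should appear in the lemma's hypotheses (as it does in the cited source), so your proof in effect also repairs the statement.
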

\begin{lem}\label{2.12}\cite{11}
 Let $F$, $G$ share $(\infty,0)$ and $V\equiv 0$. Then $F\equiv G$.
\end{lem}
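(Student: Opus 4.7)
The plan is to integrate the hypothesis $V\equiv 0$ into an algebraic (in fact M\"obius-type) identity between $F$ and $G$, and then to use the pole-sharing condition to pin down the integration constant.

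First I would recognize each summand of $V$ as a logarithmic derivative. A direct computation gives
\[
\frac{F'}{F(F-1)}=\frac{\bigl((F-1)/F\bigr)'}{(F-1)/F},
\]
and the analogous identity holds for $G$. Hence $V$ is precisely the logarithmic derivative of the meromorphic function
\[
h:=\frac{(F-1)G}{F(G-1)}=\frac{(F-1)/F}{(G-1)/G}.
\]
The hypothesis $V\equiv 0$ therefore says $h'/h\equiv 0$, which forces $h$ to be a nonzero constant $C$. Clearing denominators and dividing by $FG$ rewrites this as
\[
\frac{1}{F}-\frac{C}{G}\equiv 1-C.
\]

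Next I would determine $C$ by exploiting the sharing hypothesis. Since $F$ and $G$ share $(\infty,0)$, every pole $z_{0}$ of $F$ is simultaneously a pole of $G$. Taking the limit $z\to z_{0}$ in the displayed relation, both $1/F$ and $C/G$ vanish, forcing $1-C=0$, that is, $C=1$. Substituting $C=1$ back into $h\equiv C$ yields $(F-1)G\equiv F(G-1)$, which simplifies immediately to $F\equiv G$, as required.

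The only delicate point, and the main obstacle, is ensuring that $F$ (equivalently $G$) has at least one pole so that the limiting evaluation is meaningful. In the intended applications of this lemma within the paper, $F=W(f)$ and $G=W(L(z,f))$ for a transcendental meromorphic $f$, and the discriminant condition $c^{2}/(4bd)\neq 1$ makes $bz^{2m}+cz^{m}+d$ split into $2m$ distinct simple factors in $f$; since $f$ is transcendental, Picard's theorem guarantees that $f$ assumes at least one of those roots, which produces a pole of $F$. Thus the pole-evaluation step is valid in context and the lemma follows.
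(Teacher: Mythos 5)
The paper offers no proof of this lemma; it is quoted from Yi--Yang [11], so there is nothing to compare line by line. Your argument is the standard one and it is essentially correct: writing $\frac{F'}{F(F-1)}$ as the logarithmic derivative of $(F-1)/F$, integrating $V\equiv 0$ to $\frac{(F-1)/F}{(G-1)/G}\equiv C$, and evaluating $\frac{1}{F}-\frac{C}{G}\equiv 1-C$ at a common pole to force $C=1$. You are also right to flag the hidden hypothesis: without a pole the statement as printed is actually false (e.g.\ $G=2+e^{z}$, $F=-1-2e^{-z}$ satisfy $V\equiv 0$, share $(\infty,0)$ vacuously, yet $F\not\equiv G$), so the existence of a pole is genuinely needed and the paper glosses over it. The one inaccuracy is in your patch: for $m=1$ the denominator $bz^{2m}+cz^{m}+d$ has only two distinct roots, and Picard's theorem permits a transcendental meromorphic $f$ to omit both of them, so you cannot conclude that $f$ assumes one of the roots. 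The fix is immediate: if $f$ omits both roots, it cannot also omit $\infty$, and since $n>2m$ every pole of $f$ of order $p$ is a pole of $F=W(f)$ of order $(n-2m)p>0$; either way $F$ has a pole and your limiting evaluation goes through. With that two-case remark added, the proof is complete and, in context, more careful than the paper's bare citation.
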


\section{Proof of the theorems}
\begin{proof} [Proof of Theorem \ref{t1}]

Let $F$ and $G$ be two functions defined in (\ref{e2.1}) and (\ref{e2.2}).\par  Since $E_{f}(S,3)=E_{L(z,f)}(S,3)$ and $E_{f}(\{\infty\},0)=E_{L(z,f)}(\{\infty\},0),$ it follows that $F$, $G$ share $(1,3)$ and $(\infty,0).$\par

From (\ref{e1.1f}), we have \beas T(z)-\beta_{i}=-a\displaystyle\frac{\displaystyle\prod_{j=1}^{m}(z-\omega_{j})^{3}R_{n-3m}(z)}{bz^{2m}+cz^{m}+d},\eeas where $R_{n-3m}(z)$ is a polynomial of degree $n-3m$ and $i=1,2,\ldots,m$.\par

Therefore, we have \bea\label{e3.1} N_{2}\Big(r,\beta_{i};F\Big)&\leq& 2\ol N(r,\omega_{j};f)+N(r,0;R_{n-3m}(f))\\&\leq&\nonumber 2\ol N(r,\omega_{j};f)+(n-3m)T(r,f)+S(r,f)\\&\leq& \nonumber (n-m)T(r,f)+S(r,f).\eea

Similarly, \bea \label{e3.2} N_{2}\Big(r,\beta_{i};G\Big)&\leq& (n-m)T(r,L(z,f))+S(r,L(z,f)),\eea for each $i=1,2,\ldots,m.$ \par
\noindent {\bf{Case 1:}} Suppose $H\not\equiv 0$. Then $F\not\equiv G.$ So, it follows from Lemma \ref{2.12} that $V\not\equiv 0.$\par
Using $(\ref{e3.1})$, $(\ref{e3.2})$ and Lemma \ref{2.10} in Lemma \ref{2.8}, we obtain \beas &&n\left(m+\frac{1}{2}\right)\{T(r,f(z))+T(r,L(z,f))\}\\&\leq&\; 2\{\ol N(r,0;f()+\ol N(r,0;L(z,f))\}+\ol N(r,\infty;f)+\ol N(r,\infty;L(z,f))\\&&+\sum_{i=1}^{m}N_{2}(r,\beta_{i};F)+\sum_{i=}^{m}N_{2}(r,\beta_{i};G)+\ol N_{*}(r,\infty;f,L(z,f))\\&&-\left(t-\frac{3}{2}\right)\ol N_{*}(r,1;F,G)+S(r,f)+S(r,L(z,f))\\&\leq&   2\{\ol N(r,0;f)+\ol N(r,0;f,L(z,f))\}+\frac{2}{n-1}\{\ol N(r,0;f)+\ol N(r,0;f,L(z,f))\}\\&& +\frac{1}{nk+n-1}\{\ol N(r,0;f)+\ol N(r,0;f,L(z,f))\}+\sum_{i=1}^{m}\ol N_{2}(r,\beta_{i}; F)\\&& +\sum_{i=1}^{m}\ol N_{2}(r,\beta_{i}; G)-\left(t-\frac{3}{2}-\frac{2}{n-1}-\frac{1}{nk+n-1}\right)\ol N_{*}(r,1;F,G)\\&& +S(r,f)+S(r,L(z,f))\\&\leq& 2\{T(r,f)+T(r,L(z,f))\}+m(n-m)\{T(r,f)+T(r,L(z,f))\}\\&&+\frac{2}{n-1}\{T(r,f)+T(r,L(z,f))\}+\frac{1}{nk+n-1}\{T(r,f)+T(r,L(z,f))\}\\&&-\left(t-\frac{3}{2}-\frac{2}{n-1}-\frac{1}{nk+n-1}\right)\ol N_{*}(r,1;F,G)+S(r,f)+S(r,L(z,f)).\eeas Therefore,

 \bea \label{e3.2a}&& \left(\frac{n}{2}-2-\frac{2}{n-1}-\frac{1}{nk+n-1}+m^{2}\right)\{T(r,f)+T(r,L(z,f))\}\\&\leq&\nonumber -\left(t-\frac{3}{2}-\frac{2}{n-1}-\frac{1}{nk+n-1}\right)\ol N_{*}(r,1;F,G)+S(r,f)+S(r,L(z,f)).\eea

{\bf{Subcase 1.1:}} Suppose $n=2m+3$.\par
{\bf{Subcase 1.1.1:}} Let $m=1$. Then $n=5$. Therefore, putting the values of $m$, $n$, $t=3$ and $k=0$ in $(\ref{e3.2a})$, we get
\beas T(r,f)+T(r,L(z,f))\leq -\ol N(r,1;F,G)+S(r,f)+S(r,L(z,f)).\eeas $i.e.,$ \beas T(r,f)+T(r,L(z,f))\leq S(r,f)+S(r,L(z,f)),\eeas which is a contradiction.

{\bf{Subcase 1.1.2:}} Suppose $m\geq 2$. Then $n\geq 7$. Then putting $t=3$ and $k=0$ in $(\ref{e3.2a})$, we get \beas && \left\{
 \frac{n}{2}-2-\frac{3}{n-1}+m^{2}\right\}(T(r,f)+T(r,L(z,f)))\\&\leq& -\left(\frac{3}{2}-\frac{3}{n-1}\right)\ol N_{\star}(r,1;f,L(z,f))+S(r,f)+S(r,L(z,f)).\eeas
 Since $\displaystyle\frac{3}{2}-\displaystyle\frac{3}{n-1}>0$ for $n\geq 7$, the above equation can be written as \beas \left\{
 \frac{n}{2}-2-\frac{3}{n-1}+m^{2}\right\}(T(r,f)+T(r,L(z,f)))\leq S(r,f)+S(r,L(z,f)),\eeas which is not possible since $m\geq 2$ and $n\geq 7$.

  {\bf{Case 2:}} Suppose $H\equiv 0$. After integration we get, \be\label{e3.3} F\equiv \frac{AG+B}{CG+D},\ee where $A,B,C,D$ are complex constants satisfying $AD-BC\neq 0.$\par As $F$, $G$ share $(\infty,0)$, it follows from (\ref{e3.3}) that $f$, $L(z,f)$ share $(\infty,\infty)$ and $T(r,f(z))=T(r,L(z,f))+S(r,f)$.\par
{\bf{Subcase 2.1:}} Let $AC \neq 0$. Then $F-\displaystyle\frac{A}{C}=\frac{-(AD-BC)}{C(CG+D)}\neq 0.$ \par

Therefore, by the Second Fundamental Theorem of Nevallina, we get \beas nT(r,f)&\leq& \ol N(r,0;F)+\ol N(r,\infty;F)+\ol N(r,\frac{A}{C};F)+S(r,F)\\ &\leq & 2T(r,f)+S(r,f)\eeas which is a contradiction since $n\geq 2m+3$.\par
{\bf{Subcase 2.2:}} Suppose that $AC=0$. Since $AD-BC\neq 0,$ both $A$ and $C$ are not zero simultaneously.\par
{\bf{Subcase 2.2.1:}} Suppose $ A\neq 0$ and $C=0.$ Then (\ref{e3.3}) becomes $ F \equiv \alpha G+\beta $, where $ \alpha=\displaystyle\frac{A}{D}$ and $\beta=\displaystyle\frac{B}{D}.$\par
{\bf{Subcase 2.2.1.1:}} Let $F$ has no $1$-point. Then by the Second Fundamental Theorem, we get \beas T(r,F)\leq \ol N(r,0;F)+\ol N(r,1;F)+\ol N(r,\infty;F)+S(r,F)\eeas $or,$ \beas (n-2)T(r,f)\leq S(r,f),\eeas which is a contradiction.\par
{\bf{Subcase 2.2.1.2:}} Let $F$ has some $1$-point. Then $\alpha+\beta=1$.\par
{\bf{Subcase 2.2.1.2.1:}} Suppose $\alpha\neq1$. Then $ F \equiv \alpha G+1-\alpha$.\par
 Therefore, by the Second Fundamental Theorem, we get \beas &&(m+1)T(r,F)\\&\leq& \ol N(r,0;F)+\ol N(r,\infty;F)+\ol N(r,1-\alpha;F)+\sum_{j=1}^{m}\ol N(r,\beta_{j};F)+S(r,F)\\&\leq& \ol N(r,0;F)+\ol N(r,\infty;F)+\ol N(r,0;G)+\sum_{j=0}^{m}\ol N(r,\beta_{j};F)+S(r,F)\\&\leq& 3T(r,f)+mnT(r,f).\eeas $i.e.,$ \beas (n-3)T(r,f)\leq S(r,f),\eeas which is again a contradiction since $n\geq 2m+3$.\par


\par {\bf{Subcase 2.2.1.2.2:}} Suppose $\alpha=1$. Then $F\equiv G.$\par
$i.e.,$ \beas && \frac{af(z)^{n}}{bf(z)^{n}+cf(z)^{m}+d}\equiv \frac{aL(z,f)^{n}}{bL(z,f)^{2m}+cL(z,f)^{m}+d}.\eeas

$i.e.,$ \beas && f(z)^{n}\left(bL(z,f)^{2m}+cL(z,f)^{m}+d\right)\equiv L(z,f)^{n}\left(bf(z)^{n}+cf(z)^{m}+d\right).\eeas Suppose that $h(z)=\frac{L(z,f)}{f(z)}.$
Then the above equation can be written as
\bea \label{e3.5}d(h^{n}-1)+cf^{m}h^{m}(h^{n-m}-1)+bf^{2m}h^{2m}(h^{n-2m})=0.\eea

 Suppose $h(z)$ is not constant.\par
 After some simple calculation the above equation becomes
  \beas \left(bf^{m}h^{m}(h^{n-2m})+\frac{c}{2}(h^{n-m}-1)\right)^{2}&=&\frac{\left( c^{2}(h^{n-m}-1)^{2}-4bd(h^{n-2m}-1)(h^{n}-1)\right)}{4}\\&=& \frac{1}{4}\Phi(h).\eeas
Therefore in view of Lemma \ref{2.11},  the above equation takes the form \beas \left(bf^{m}h^{m}(h^{n-2m})+\frac{c}{2}(h^{n-m}-1)\right)^{2}=\frac{1}{4}(h-1)^{4}\prod_{j=1}^{2n-2m-4}(h-\eta_{j}),\eeas where  $\eta_{1},\eta_{2},\ldots, \eta_{2n-2m-4}$ are the simple zeros of $\Phi(h).$\par

From the above equation it is clear that all the zeros of $h-\eta_{j}$ have order atleast $2$.\par
Therefore, by the Second Fundamental Theorem, we have \beas (2n-2m-1)T(r,h)&\leq & \sum_{j=1}^{2n-2m-4}\ol N(r,\eta_{j};h)+S(r,h)\\& \leq &\frac{1}{2}\sum_{j=1}^{2n-2m-4} N(r,\eta_{j};h)+S(r,h)\\& \leq & (n-m-2)T(r,h)+S(r,h).\eeas
$i.e.$,
$$(n-m-3)T(r,h)\leq S(r,h),$$ which is impossible since $n\geq 2m+3$.
\par
 So, $h$ is constant. Hence, from $(\ref{e3.5})$, we have $h^{n}-1=0$, $h^{n-m}-1=0$ and $h^{n-2m}-1=0$. Since $gcd(n,m)=1$, we must have $h\equiv 1$.\par $i.e.$, $$f(z)\equiv L(z,f).$$
{\bf{Subcase 2.2.2:}} Suppose $A=0$ and $C\neq 0$.\par

Then (\ref{e3.3}) becomes \beas F\equiv \frac{1}{\gamma G+\delta},\eeas  where $\gamma=\frac{C}{B}$ and $\delta=\frac{D}{B}.$\par
{\bf{Subcase 2.2.2.1:}} Let $F$ has no $1$-point. Then applying the second fundamental theorem to $F$, we have \beas nT(r,f)&\leq &\ol N(r,\infty;F)+\ol N(r,0;F)+\ol N(r,1;F)+S(r,F)\\&\leq & 2T(r,f)+ S(r,f),\eeas which is a contradiction.\par
{\bf{Subcase 2.2.2.2:}} Suppose that $F$ has some $1$-point. Then $\gamma+\delta=1$.\par
{\bf{Subcase 2.2.2.2.1:}} Suppose $\gamma=1$. Then $\delta=0$ and thus $FG\equiv 1$.\par $i.e.$, \bea\label{e3.6} f(z)^{n}L(z,f)^{n}\equiv (bf^{2m}+cf^{m}+d)(bL(z,f)^{2m}+cL(z,f)^{m}+d).\eea
Since $c^{2}\neq 4bd$, by simple calculation it can be easily seen that all the roots of the equation $bz^{2m}+cz^{m}+d=0$ are simple. Let them be $\delta_{1},\delta_{2},\ldots, \delta_{2m}$.\par
Therefore, $(\ref{e3.6})$ can be written as \bea\label{e3.7} f(z)^{n}L(z,f)^{n}=\prod_{j=1}^{2m}(f-\delta_{j})\prod_{j=1}^{2m}(L(z,f)-\delta_{j})\eea
From $(\ref{e3.7})$, it is clear that the order of each $\delta_{j}$ points of $f(z)$ is atleast $n$.
\par
As $f(z)$ and $L(z,f)$ share $(\infty,\infty)$, from $(\ref{e3.6})$ it is to be observed that $\infty$ is a e.v.p. of both $f(z)$ and $L(z,f)$.
Therefore, applying the second fundamental theorem of Nevallina to $f$, we get \beas (2m-1)T(r,f)\leq \frac{2m}{n}T(r,f),\eeas which is a contradiction for $n\geq 2m+3$.

{\bf{Subcase 2.2.2.2.2:}} Let $\gamma\neq 1$.
Therefore, $$ F\equiv \frac{1}{\gamma G+1-\gamma}.$$ Since $C\neq 0,$ $\gamma\neq 0$, $G$ omits the value $-\frac{1-\gamma}{\gamma}.$\par
By the Second Fundamental Theorem of Nevalinna, we have \beas T(r,G)&\leq &\ol N(r,\infty;G)+\ol N(r,0;G)+\ol N(r,-\frac{1-\gamma}{\gamma};G)+S(r,G).\eeas $i.e.,$ \beas (n-2)T(r,L(z,f))\leq S(r,L(z,f)),\eeas which is a contradiction.\par This completes the proof of the theorem.\end{proof}

\begin{proof} [Proof of Corollary \ref{c1}]
	The proof of the corollary can be carried out in the line of the proof of theorem \ref{t1}.
	\end{proof}


\end{document}